\documentclass[a4paper,11pt]{amsart}

\usepackage{mathpazo}
\usepackage{amssymb}
\usepackage[pagebackref,
    ,pdfborder={0 0 0}
    ,urlcolor=black,a4paper,hypertexnames=false]{hyperref}
\hypersetup{pdfauthor=Clara L\"oh,pdftitle=Finite functorial semi-norms and representability}

\usepackage{pgf}
\usepackage{tikz}
\usetikzlibrary{decorations.pathmorphing}
\usepackage[all]{xy}
\SelectTips{cm}{}

\newtheorem{thm}{Theorem}[section]
\newtheorem{prop}[thm]{Proposition}
\newtheorem{lem}[thm]{Lemma}
\newtheorem{cor}[thm]{Corollary}

\newtheorem{setup}[thm]{Setup}

\theoremstyle{remark}
\newtheorem{rem}[thm]{Remark}
\newtheorem{exa}[thm]{Example}

\theoremstyle{definition}
\newtheorem{defi}[thm]{Definition}

\usepackage{amsfonts}
\newcommand{\Z}{\mathbb{Z}}
\newcommand{\Q}{\mathbb{Q}}
\newcommand{\R}{\mathbb{R}}
\newcommand{\N}{\mathbb{N}}

\DeclareMathOperator{\id}{id}
\DeclareMathOperator{\Hom}{Hom}

\DeclareMathOperator{\map}{map}
\DeclareMathOperator{\Chf}{Ch}
\def\epsilon{\varepsilon}

\DeclareMathOperator{\Cst}{{\sf C}\,\text{\raisebox{-.1em}{${}^*$}}}
\DeclareMathOperator{\Set}{{\sf Set}}
\DeclareMathOperator{\Top}{{\sf Top}}

\DeclareMathOperator{\Toppth}{{\sf Top}_{* \sf h}}

\DeclareMathOperator{\Group}{{\sf Group}}
\DeclareMathOperator{\Ab}{{\sf Ab}}

\DeclareMathOperator{\CWh}{{\sf CW}_{{\sf h}}}

\DeclareMathOperator{\snAb}{{\sf snAb}}
\DeclareMathOperator{\snAbfin}{{\sf snAb}^{{\sf fin}}}
\DeclareMathOperator{\Ch}{{\sf Ch}}
\DeclareMathOperator{\CoCh}{{\sf CoCh}}

\def\RCh#1{\Ch_{#1}}
\DeclareMathOperator{\Ob}{Ob}
\DeclareMathOperator{\Mor}{Mor}

\newcommand{\loneh}{H^{\ell^1}}

\def\args{\;\cdot\;}

\def\exi#1{%
  \exists_{#1}\;\;\;%
}

\usepackage{color}
\usepackage{pdfcolmk}

\def\draftinfo{}

\author{Clara L\"oh}
\title{Finite functorial semi-norms and representability}
\date{\today.\ \copyright{\ C.~L\"oh 2014}. 
    This work was supported by the CRC~1085 \emph{Higher Invariants} 
    (Universit\"at Regensburg).
    \draftinfo\\
     MSC~2010 classification: 55N10, 55N35, 57N65}
\begin{document}

\begin{abstract}
  Functorial semi-norms are semi-normed refinements of functors such
  as singular (co)homology. We investigate how different types of
  representability affect the (non-)triviality of finite functorial
  semi-norms on certain functors or classes. In particular, we
  consider representable functors, generalised cohomology theories,
  and so-called weakly flexible homology classes in singular homology
  and $\ell^1$-homology.
\end{abstract}

\phantom{.}
\vspace{-.1\baselineskip}

\maketitle

\section{Introduction}

  Functorial semi-norms are semi-normed refinements of functors to the
  category of Abelian groups (Definition~\ref{defffsn}). Gromov
  introduced the notion of functorial semi-norms on singular
  (co)homology~\cite{gromov, vbc} in the context of simplicial volume
  and its relation with geometry and rigidity. For example, the
  $\ell^1$-semi-norm on singular homology is a finite functorial
  semi-norm on singular homology (Example~\ref{exal1top}).  Dually,
  the $\ell^\infty$-semi-norm on singular cohomology
  (Example~\ref{exalinftytop}) is a functorial semi-norm on singular
  cohomology, which is not finite; in contrast, by construction, the
  $\ell^\infty$-semi-norm on bounded cohomology (Example~\ref{exabc})
  is a finite functorial semi-norm on bounded cohomology.
  Applications of functorial semi-norms in manifold topology include
  degree theorems \cite{vbc,lafontschmidt,loehsauer}. Conversely,
  knowledge about mapping degrees allows to construct functorial
  semi-norms with interesting properties~\cite{crowleyloeh,
    kotschickloeh}.

  We investigate the relation between (non-)triviality of finite
  functorial semi-norms and so-called weak flexibility:

  \begin{defi}[weakly flexible]
    Let $C$ be a category, let $F \colon C \longrightarrow \Ab$ be a
    functor, and let $X \in \Ob(C)$. An element~$\alpha \in F(X)$ is \emph{weakly
      flexible (with respect to~$F$)} if there exist~$Y \in
    \Ob(C)$ and~$\beta \in F(Y)$ such that the set
    \[ D(\beta,\alpha) 
       := \bigl\{ d \in \Z 
       \bigm| \exi{f \in \Mor_C(Y,X)}
       F(f)(\beta) = d \cdot \alpha 
       \bigr\}  
    \]
    is infinite. 
  \end{defi}
  
  It is a simple, yet fundamental, observation that any finite
  homogeneous  functorial semi-norm is trivial on weakly flexible classes
  (Proposition~\ref{propfund}).
  
  In this article, we give equivalent descriptions of weak flexibility
  in singular homology (Section~\ref{subsecwfsing}) and
  $\ell^1$-homology (Section~\ref{subsecwfl1}). Moreover, we apply the
  above observation to representable functors
  (Section~\ref{subsecwfrep}) and countably additive functors 
  (Section~\ref{subsecwfgc}).  This shows that many classical functors
  from algebraic topology do \emph{not} admit any non-trivial finite
  homogeneous functorial semi-norms. For instance, this might be interesting for the study of
  comparison maps in bounded cohomology.

\section{Functorial semi-norms}\label{secbasics}

In this section, we introduce some basic notation and give a
definition of functorial semi-norms, generalising Gromov's concept of
functorial semi-norms on singular (co)homology:

\begin{defi}[semi-norms on Abelian groups]
  \hfil
  \begin{itemize}
    \item A \emph{semi-norm} on an Abelian group~$A$  
      is a map~$|\cdot| \colon A \longrightarrow \R_{\geq 0} \cup \{\infty\}$ 
        with the following properties: 
        \begin{itemize}
          \item  We have~$|0| = 0$.
          \item For all~$x \in A$ we have~$|-x| = |x|$.
          \item For all~$x , y \in A$ we have~$|x + y| \leq |x| + |y|$. Here, 
            $x + \infty := \infty$ for all~$x \in \R_{\geq 0} \cup \{\infty\}$.
        \end{itemize}
        Such a semi-norm is \emph{finite} if the value~$\infty$ is not
        in the image. The semi-norm is \emph{homogeneous} if for
        all~$n \in \Z \setminus \{0\}$ and all~$x \in A$ we have~$|n
        \cdot x| = |n| \cdot |x|$, where~$|n|$ denotes the ordinary
        absolute value on~$\Z$.  Here, $n \cdot \infty := \infty$ for
        all~$n \in \N \setminus \{0\}$.
    \item A \emph{(finitely) semi-normed Abelian group} is an Abelian group 
      together with a (finite) semi-norm.
    \item We write $\snAb$ for the category of semi-normed Abelian
      groups, where the morphisms are group homomorphisms that are
      norm-non-increasing.
    \item We write $\snAbfin$ for the category of finitely semi-normed
      Abelian groups, where the morphisms are group homomorphisms that
      are norm-non-increasing.
  \end{itemize}
\end{defi}

\begin{defi}[functorial semi-norm]\label{defffsn}
  Let $C$ be a category, let $F \colon C \longrightarrow \Ab$ 
  be a functor (possibly contravariant).
  \begin{itemize}
    \item A \emph{functorial semi-norm on~$F$} is a factorisation
      functor~$\widehat F \colon C \longrightarrow \snAb$ of~$F$ 
      through the forgetful functor~$\snAb \longrightarrow \Ab$:
      \begin{align*}
        \xymatrix{%
          & 
          \snAb \ar[d]^{\txt{forget}}
          \\
          C \ar[r]_{F} \ar@{-->}[ur]^{\widehat F}
          & \Ab
        }
      \end{align*}
    \item A \emph{finite functorial semi-norm on~$F$} is a factorisation
      $\widehat F \colon C \longrightarrow \snAbfin$ of~$F$ 
      through the forgetful functor~$\snAbfin \longrightarrow \Ab$:
      \begin{align*}
        \xymatrix{%
          & 
          \snAbfin \ar[d]^{\txt{forget}}
          \\
          C \ar[r]_{F} \ar@{-->}[ur]^{\widehat F}
          & \Ab
        }
      \end{align*}
  \end{itemize}
\end{defi}

More explicitly, a [finite] functorial semi-norm on a functor~$F
\colon C \longrightarrow \Ab$ consists of a choice of a [finite]
semi-norm~$|\cdot|$ on~$F(X)$ for every~$X \in \Ob(C)$ such that for
all morphisms~$f \colon X \longrightarrow Y$ in~$C$ and all~$\alpha
\in F(X)$ we have
\[ \bigl|F(f)(\alpha)\bigr| \leq |\alpha|.
\]
A functorial semi-norm on~$F$ is \emph{homogeneous} if for all~$X \in
\Ob(C)$ the corresponding semi-norm on~$F(X)$ is homogeneous.
Notice that all homogeneous (functorial) semi-norms are trivial on torsion
elements.

\begin{exa}[trivial functorial semi-norm]
  If $C$ is a category and $F \colon C \longrightarrow \Ab$ is a
  functor, then equipping for every~$X \in \Ob(C)$ the group~$F(X)$
  with the zero semi-norm gives a functorial semi-norm on~$F$, the 
  \emph{trivial functorial semi-norm on~$F$}.
\end{exa}

\subsection{$\ell^1$-Homology}

A key example is given by the $\ell^1$-semi-norm on homology of
simplicial sets; the $\ell^1$-semi-norm on singular homology and the
$\ell^1$-semi-norm on group homology are instances of this
construction:
 
\begin{exa}[$\ell^1$-semi-norm on ($\ell^1$)-homology of simplicial sets]
  \label{exal1simpset}
  Let $\Delta$ be the \emph{simplex category}: The objects of~$\Delta$
  are natural numbers; for~$n, m \in \N$ the set of morphisms~$n
  \longrightarrow m$ in~$\Delta$ is the set of monotone (increasing)
  functions of type~$\{0,\dots,n\} \longrightarrow \{0,\dots,m\}$, and
  composition of morphisms in~$\Delta$ is given by the set-theoretic
  composition of the underlying functions. The category of
  \emph{simplicial sets} is denoted by~$\Delta(\Set)$; objects are
  contravariant functors~$\Delta \longrightarrow \Set$ and morphisms
  are natural transformations of such functors. 
  If $R$ is a ring (with unit), then there is the corresponding chain
  complex functor
  \[ \Chf_R \colon \Delta(\Set) \longrightarrow \RCh R 
                              \longrightarrow \RCh \Z
  \]
  into the category of $R$-chain complexes~\cite{goerssjardine} followed by
  the forgetful functor into $\Z$-chain comlexes. By construction, for
  a simplicial set~$S$ and~$n\in \N$, the chain module~$\Chf_R(S)_n$
  is the free $R$-module with basis~$S(n)$.  If $R$ (as Abelian group)
  is equipped with a norm, then $\Chf_R(S)_n$ inherits a finite
  norm~$|\cdot|_{1,R}$: the $\ell^1$-norm with respect to the
  basis~$S(n)$. This norm in turn leads to the
  \emph{$\ell^1$-semi-norm on homology} in degree~$n$:
  \begin{align*} 
    \|\cdot\|_{1,R} \colon H_n\bigl(\Chf_R(S)\bigr) & \longrightarrow \R_{\geq 0}\\
    \alpha & \longmapsto \inf \bigl\{ |c|_{1,R} 
             \bigm| c \in \Chf_R(S)_n,\ \partial c= 0,\  
             [c] = \alpha \bigr\}.
  \end{align*}

  By definition of~$|\cdot|_{1,R}$, the functor~$\Chf_R$ turns natural
  transformations into norm-non-increasing chain maps. Applying
  homology in degree~$n$ hence yields a finite functorial semi-norm on the
  composition
  \[ H_n \circ \Chf_R \colon \Delta(\Set) \longrightarrow \Ab.
  \]
  
  By construction of the chain complex functor~$\Chf_R$, the boundary
  operators in the chain complex~$\Chf_R(S)$ are bounded with respect to
  the $\ell^1$-norm. Hence, completing with respect to~$|\cdot|_{1,R}$
  (and extending the boundary operators continuously) leads to a
  functor
  \[ \Chf_R^{\ell^1} \colon \Delta(\Set) \longrightarrow \RCh \Z.
  \]
  Clearly, the $\ell^1$-norm on the chain modules of~$\Chf_R(S)$ extends
  to an~$\ell^1$-norm on the chain modules of~$\Chf_R^{\ell^1}(S)$,
  which we will also denote by~$|\cdot|_{1,R}$. Similarly to the
  uncompleted case, we then obtain a finite functorial $\ell^1$-semi-norm on
  the \emph{$\ell^1$-homology} functor
  \[ H_n^{\ell^1} \circ \Chf_R 
     = H_n \circ \Chf_R^{\ell^1} \colon \Delta(\Set)  \longrightarrow
     \Ab
  \]
  in degree~$n$. 

  In all these cases, we abbreviate~$\|\cdot\|_1 := \|\cdot\|_{1,\R}$.
  Notice that if $\Q$ is a subring of~$R$, then these functorial semi-norms 
  on ($\ell^1$-)homology are homogeneous. In general, $\|\cdot\|_{1.\Z}$ 
  is \emph{not} homogeneous.
\end{exa}

\begin{exa}[$\ell^1$-semi-norm on singular homology]\label{exal1top}
  Let $R$ be a normed ring, and let $n \in \N$. Then singular
  homology~$H_n(\args;R)$ in degree~$n$ of topological spaces with
  coefficients in~$R$ can be described as the composition
  \[ H_n \circ \Chf_R \circ S \colon 
    \Top \longrightarrow \Delta(\Set) \longrightarrow \RCh \Z \longrightarrow \Ab,
  \]
  where $S \colon \Top \longrightarrow \Delta(\Set)$ denotes the total
  singular complex functor~\cite{goerssjardine}. So, $H_n(\args;R)$ inherits a finite 
  functorial semi-norm, the $\ell^1$-semi-norm~\mbox{$\|\cdot\|_{1,R}$}.  More explicitly, $\|\cdot\|_{1,R}$ 
  is given as follows: If $X$ is a topological space and~$\alpha \in H_n(X;R)$, 
  then
  \[ \|\alpha\|_{1,R} = \inf \biggl\{ \sum_{j=0}^k |a_j|
     \biggm| \text{$\sum_{j = 0}^k a_j \cdot \sigma_j \in C_n(X;R)$ is a cycle representing~$\alpha$}
     \biggr\},
  \]
  where $|\cdot|$ denotes the norm on~$R$. 

  An example of a topological invariant defined in terms of the
  $\ell^1$-semi-norm on singular homology with $\R$-coefficients is
  the simplicial volume~\cite{vbc, mapsimvol}: The \emph{simplicial
    volume} of an oriented closed connected $n$-manifold~$M$ is
  \[ \|M\| := \bigl\| [M]_\R \bigr\|_1 \in \R_{\geq 0},
  \]
  where $[M]_\R \in H_n(M;\R)$ is the $\R$-fundamental class of~$M$.
  For example, the simplicial volume of hyperbolic manifolds is
  non-zero~\cite{vbc,thurston,benedettipetronio}.  Classical applications of simplicial volume
  include Gromov's proof of Mostow rigidity~\cite{munkholm} and degree
  theorems in the presence of sufficiently negative
  curvature~\cite{vbc,lafontschmidt,loehsauer}. 
\end{exa}

Functorial semi-norms on singular homology that differ essentially
from the $\ell^1$-semi-norm can be constructed via manifold
topology~\cite{crowleyloeh, kotschickloeh}. However, it remains an
open problem to determine whether there are also \emph{finite}
functorial semi-norms on singular homology that are not dominated in
some weak sense (e.g., through a monotonic function) by the
$\ell^1$-semi-norm.

\begin{exa}[$\ell^1$-semi-norm on group homology]\label{exal1groups}
  Let $R$ be a normed ring, and let $n \in \N$.  Similarly to the
  previous example, homology of groups in degree~$n$ with coefficients
  in~$R$ can be described as the composition
  \[ H_n \circ \Chf_R \circ B \colon 
     \Group \longrightarrow \Delta(\Set) \longrightarrow \RCh \Z \longrightarrow \Ab,
  \]
  where $B \colon \Group \longrightarrow \Delta(\Set)$ denotes the
  simplicial classifying space functor~\cite{goerssjardine}. Hence, also
  group homology in degree~$n$ with coefficients in~$R$ inherits a
  functorial $\ell^1$-semi-norm~$\|\cdot\|_{1,R}$. 

  The canonical isomorphism between group homology via
  the simplicial classifying space functor and singular homology of 
  Eilenberg-MacLane spaces of type~$K(\args,1)$ is isometric with 
  respect to~$\|\cdot\|_{1,R}$, as can be seen by constructing explicit 
  mutually inverse chain homotopy equivalences. 
\end{exa}

\begin{exa}[$\ell^1$-semi-norm on $\ell^1$-homology]\label{exal1l1}
  Let $R$ be a normed ring, and let $n \in \N$. Then
  \emph{$\ell^1$-homology of spaces or groups} respectively in
  degree~$n$ with coefficients in~$R$ are defined by
  \begin{align*}
    \loneh_n(\args;R) & := H^{\ell^1}_n \circ \Chf_R \circ S 
      \colon \Top \longrightarrow \Delta(\Set) \longrightarrow \Ab,\\
    \loneh_n(\args;R) & := H^{\ell^1}_n \circ \Chf_R \circ B 
      \colon \Group \longrightarrow \Delta(\Set) \longrightarrow \Ab.
  \end{align*}
  As in Example~\ref{exal1simpset}, these functors admit functorial
  $\ell^1$-semi-norms~$\|\cdot\|_{1,R}$. For more information about
  $\ell^1$-homology and its applications we refer to the
  literature~\cite{loehl1,buehler,bfs,loehpagliantini}.
\end{exa}

\subsection{Bounded cohomology}

Dually, we can also equip cohomology of simplicial sets with a
functorial semi-norm -- the $\ell^\infty$-semi-norm, which in turn 
leads to bounded cohomology:

\begin{exa}[$\ell^\infty$-semi-norm on (bounded) cohomology of simplicial sets]
  \label{exalinftysimpset}
  Let $R$ be a normed ring. Then there is the dual cochain 
  complex functor
  \[ \Chf_R^* := \Hom_\Z(\args,R) \circ \Chf_\Z
     \colon \Delta(\Set) \longrightarrow \CoCh_R \longrightarrow \CoCh_\Z
  \]
  as well as the topological dual cochain complex functor 
  \[ \Chf_R^\# := B(\args,R) \circ \Chf_\Z 
     \colon \Delta(\Set) \longrightarrow \CoCh_R \longrightarrow \CoCh_\Z
  \]
  of bounded $\Z$-linear functionals; both these functors are
  contravariant.  If $S$ is a simplicial set and $n \in \N$, then
  the modules~$\Chf_R^*(S)^n$ and $\Chf_R^\#(S)^n$ inherit a
  norm~$|\cdot|_{\infty,R}$: the $\ell^\infty$-norm on linear
  functionals with respect to the $\ell^1$-norm on~$\Chf_\Z(S)_n$;
  this norm on~$\Chf_R^*(S)^n$, in general, is not finite, but -- by
  construction -- this norm on~$\Chf_R^\#(S)^n$ is finite. These norms
  in turn lead to the \emph{$\ell^\infty$-semi-norm on 
    cohomology} in degree~$n$
  \begin{align*}
    \|\cdot\|_{\infty,R} 
    \colon H^n\bigl(\Chf_R^*(S)\bigr)
    & \longrightarrow \R_{\geq 0} \cup \{\infty\}
    \\
    \varphi &\longmapsto
    \inf \bigl\{ |f|_{\infty,R} \bigm| f \in \Chf_R^*(S)^n, 
                                     \delta f =0, [f] =\varphi\bigr\},
  \end{align*}
  and the \emph{$\ell^\infty$-semi-norm on bounded cohomology} in degree~$n$:
  \begin{align*}
    \|\cdot\|_{\infty,R} 
    \colon H^n\bigl(\Chf_R^\#(S)\bigr)
    & \longrightarrow \R_{\geq 0}
    \\
    \varphi &\longmapsto
    \inf \bigl\{ |f|_{\infty,R} \bigm| f \in \Chf_R^\#(S)^n, 
                                     \delta f =0, [f] =\varphi\bigr\}. 
  \end{align*}
  By definition of~$|\cdot|_{\infty,R}$, the functors~$\Chf_R^*$ and $\Chf_R^\#$ 
  turn natural transformations into norm-non-increasing chain maps. Applying 
  (co)homology in degree~$n$ hence yields a finite functorial semi-norm 
  on the contravariant compositions
  \begin{align*}
    H^n \circ \Chf_R^* \colon \Delta(\Set) & \longrightarrow \Ab, \\
    H^n \circ \Chf_R^\# \colon \Delta(\Set) & \longrightarrow \Ab.
  \end{align*}
  In all theses cases, we abbreviate~$\|\cdot\|_\infty :=
  \|\cdot\|_{\infty,\R}$.  If $\Q$ is a subring of~$R$, then these
  functorial semi-norms on (bounded) cohomology are homogeneous. In
  general, $\|\cdot\|_{\infty,\Z}$ is \emph{not} homogeneous.
\end{exa}

\begin{exa}[$\ell^\infty$-semi-norm on singular cohomology]\label{exalinftytop}
  Let $R$ be a normed ring, and let $n \in \N$. Then singular
  cohomology $H^n(\args;R)$ in degree~$n$ of topological spaces with
  coefficients in~$R$ can be described as the composition
  \[ H^n \circ \Chf_R^* \circ S \colon \Top \longrightarrow 
                               \Delta(\Set) \longrightarrow \CoCh_\Z
                               \longrightarrow \Ab,
  \]
  where $S \colon \Top \longrightarrow \Delta(\Set)$ denotes the total
  singular complex functor. So, $H^n(\args;R)$ inherits a functorial
  semi-norm, the $\ell^\infty$-semi-norm~$\|\cdot\|_{\infty,R}$. More
  explicitly, $\|\cdot\|_{\infty,R}$ is given as follows: If $X$ is a 
  topological space and $\varphi \in H^n(X;R)$, then
  \[ \|\varphi\|_{\infty,R} 
     = \inf \Bigl\{ \sup_{\sigma \in \map(\Delta^n,X)} |f(\sigma)| 
            \Bigm| 
                    \text{$f\in C^n(X;R)$ is a cocycle representing~$\varphi$}
            \Bigr\},
  \]
  where $|\cdot|$ denotes the norm on~$R$.
\end{exa}

\begin{exa}[$\ell^\infty$-semi-norm on group cohomology]\label{exlinftygroup}
  Let $R$ be a normed ring, and let $n \in \N$. As in 
  the previous examples, cohomology of groups in degree~$n$ with
  coefficients in~$R$ inherits a functorial
  $\ell^\infty$-semi-norm~$\|\cdot\|_{\infty,R}$, based on the
  simplicial classifying space functor~$B \colon \Group
  \longrightarrow \Delta(\Set)$.
\end{exa}

\begin{exa}[$\ell^\infty$-semi-norm on bounded cohomology]\label{exabc}
  Let $R$ be a normed ring, and let $n \in \N$. Then \emph{bounded cohomology 
    of spaces or groups} respectively in degree~$n$ with coefficients in~$R$ 
  are defined by
  \begin{align*}
    H^n_b(\args;R) & := 
    H^n \circ \Chf_R^\# \circ S \colon \Top \longrightarrow \Delta(\Set) 
    \longrightarrow \Ab 
    \\
    H^n_b(\args;R) & := 
    H^n \circ \Chf_R^\# \circ B \colon \Group \longrightarrow \Delta(\Set)
    \longrightarrow \Ab.
  \end{align*}
  As described in Example~\ref{exalinftysimpset}, these functors 
  admit \emph{finite} functorial $\ell^\infty$-semi-norms~$\|\cdot\|_{\infty,R}.$ 

  The inclusion of bounded linear functionals into all linear
  functionals induces natural transformations~$H^n_b(\args;R)
  \Longrightarrow H^n(\args;R)$ between bounded cohomology of
  spaces/groups and ordinary cohomology of spaces/groups, the
  so-called \emph{comparison maps}. In general, these comparison 
  maps are neither injective nor surjective (Example~\ref{exa:compmaps}).
\end{exa}

There is a duality principle that allows to express the
$\ell^1$-semi-norm on homology in terms of the $\ell^\infty$-semi-norm
of the corresponding bounded cohomology~\cite{vbc}. Therefore, bounded
cohomology is an important algebraic tool in the study of simplicial
volume. For more information about bounded
cohomology and its applications we refer to the
literature~\cite{vbc,ivanov,monod,buehler}.


\section{Weak flexibility}\label{secweakflex}

We introduce the following notions of representability of classes,
generalising (strong) inflexibility of
manifolds~\cite{crowleyloeh}:

\begin{defi}[(weakly) flexible]
  Let $C$ be a category, let $F \colon C \longrightarrow \Ab$ be a
  functor, let $X \in \Ob(C)$, and let $\alpha \in F(X)$.
  \begin{itemize}
    \item The element~$\alpha$ is \emph{flexible (with respect to~$F$)}
      if the set
      \[ D(\alpha) := \bigl\{ d \in \Z 
                        \bigm| \exists_{f \in \Mor_C(X,X)}\;\;\; 
                               F(f)(\alpha) = d \cdot \alpha 
                        \bigr\} 
      \]
      is infinite. Otherwise $\alpha$ is called \emph{inflexible}.
    \item The element~$\alpha$ is \emph{weakly flexible (with respect
      to~$F$)} if there exists an object~$Y \in \Ob(C)$ and~$\beta \in
      F(Y)$ such that the set 
      \[ D(\beta,\alpha) 
         := \bigl\{ d \in \Z 
                        \bigm| \exists_{f \in \Mor_C(Y,X)}\;\;\; 
                               F(f)(\beta) = d \cdot \alpha 
                        \bigr\}  
      \]
      is infinite. Otherwise $\alpha$ is called \emph{strongly 
        inflexible}.
  \end{itemize}
\end{defi}

\begin{exa}
For example, fundamental classes of spheres, tori, and (oriented)
projective spaces are flexible with respect to singular homology 
with $\Z$- or $\R$-coefficients. 
Not all (fundamental classes of) oriented closed simply connected manifolds are
flexible~\cite{arkowitzlupton,crowleyloeh,amann,costoyaviruel};
however, it remains an open problem whether there exist oriented
closed simply connected manifolds that are strongly inflexible.
\end{exa}

\begin{exa}[torsion classes]
  Let $C$ be a category and let $F \colon C \longrightarrow \Ab$ be a
  functor. If $X \in \Ob(C)$ and $\alpha \in F(X)$ is a torsion
  element of order~$m$, then $m \cdot \Z + 1\subset D(\alpha)$ and so
  $\alpha$ is flexible.
\end{exa}


In the following, we will make use of the following simple, yet
fundamental, observation:

\begin{prop}[weak flexibility and finite functorial semi-norms]
  \label{propfund}
  Let $C$ be a category, let $F \colon C \longrightarrow \Ab$ 
  be a functor, let $X \in \Ob(C)$ and let $\alpha \in F(X)$ be 
  weakly flexible. 
  If $\widehat F \colon C \longrightarrow \snAbfin$ 
  is a finite homogeneous functorial semi-norm on~$F$, then $|\alpha|_{\widehat F} = 0$.
\end{prop}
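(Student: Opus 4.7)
The plan is direct: unpack the definition of weak flexibility, push $\beta$ into $F(X)$ along the witnessing morphisms to obtain many multiples of $\alpha$ of bounded semi-norm, and then use homogeneity together with finiteness to force $|\alpha|_{\widehat F} = 0$.

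More concretely, I would first invoke weak flexibility to fix~$Y \in \Ob(C)$ and~$\beta \in F(Y)$ such that the set
\[ D(\beta,\alpha) = \bigl\{ d \in \Z \bigm| \exi{f \in \Mor_C(Y,X)} F(f)(\beta) = d \cdot \alpha \bigr\} \]
is infinite. For each~$d \in D(\beta,\alpha)$ choose a morphism~$f_d \colon Y \longrightarrow X$ in~$C$ with~$F(f_d)(\beta) = d \cdot \alpha$. Since $\widehat F \colon C \longrightarrow \snAbfin$ is a functorial semi-norm, each induced map~$F(f_d)$ is norm-non-increasing, so
\[ \bigl| d \cdot \alpha \bigr|_{\widehat F} = \bigl| F(f_d)(\beta) \bigr|_{\widehat F} \leq |\beta|_{\widehat F}. \]

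Next I would apply homogeneity: for every~$d \in D(\beta,\alpha) \setminus \{0\}$ we have~$|d \cdot \alpha|_{\widehat F} = |d| \cdot |\alpha|_{\widehat F}$, whence
\[ |d| \cdot |\alpha|_{\widehat F} \leq |\beta|_{\widehat F}. \]
Because $D(\beta,\alpha)$ is infinite, the set~$\{|d| : d \in D(\beta,\alpha) \setminus \{0\}\} \subset \N$ is unbounded; since the semi-norm~$\widehat F$ is finite, $|\beta|_{\widehat F} \in \R_{\geq 0}$, so the only way the displayed inequality can hold for arbitrarily large~$|d|$ is $|\alpha|_{\widehat F} = 0$.

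There is essentially no obstacle: the only point that requires a touch of care is excluding the case $d = 0$ when invoking homogeneity, which is handled automatically by the fact that an infinite subset of~$\Z$ has elements of arbitrarily large absolute value. It is worth noting in the write-up that finiteness of~$|\beta|_{\widehat F}$ is used in an essential way here, which explains why the statement is restricted to finite functorial semi-norms.
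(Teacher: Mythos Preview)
Your argument is correct and is essentially the same as the paper's: both fix~$Y$ and~$\beta$ witnessing weak flexibility, use that morphisms are norm-non-increasing to bound~$|d|\cdot|\alpha|_{\widehat F}$ by the finite quantity~$|\beta|_{\widehat F}$, and conclude from unboundedness of~$|d|$. The paper merely compresses this into the single line $|\alpha|_{\widehat F} \leq \inf\bigl\{\tfrac{1}{d}\cdot|\beta|_{\widehat F}\bigm| d \in D(\beta,\alpha)\setminus\{0\}\bigr\} = 0$.
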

\begin{proof}
  Because $\alpha$ is weakly flexible, there is an object~$Y \in
  \Ob(C)$ and $\beta \in F(Y)$ such that $D(\beta,\alpha)$ is
  infinite. If $\widehat F \colon C \longrightarrow \snAbfin$ is a
  finite homogeneous functorial semi-norm on~$F$, we obtain
  \[ |\alpha|_{\widehat F} 
     \leq \inf \Bigl\{ \frac 1d \cdot |\beta|_{\widehat F} 
               \Bigm| d \in D(\beta, \alpha) \setminus \{0\}\Bigr\} = 0.
     \qedhere
  \]
\end{proof}

\begin{exa}
  In particular, the fundamental class of an oriented closed connected
  manifold with non-zero simplicial volume ist \emph{not} weakly
  flexible with respect to real singular homology.  Prominent examples
  of this type are hyperbolic
  manifolds~\cite{vbc,thurston,benedettipetronio}.
\end{exa}

\section{Representable functors and cohomology theories}

We will now apply the observation Proposition~\ref{propfund} to
representable functors (Section~\ref{subsecwfrep}) as well as 
countably additive functors (Section~\ref{subsecwfgc}).

\subsection{Weak flexibility and representable functors}\label{subsecwfrep}

Representable functors do not admit non-trivial finite functorial semi-norms:

\begin{cor}[finite functorial semi-norms on representable functors]
  \label{corffsnrep}
  Let $C$ be a category and let $F \colon C \longrightarrow \Ab$ be a
  (co- or contravariant) representable functor. If $\widehat F \colon
  C \longrightarrow \snAbfin$ is a finite homogeneous functorial semi-norm on~$C$,
  then $\widehat F$ is trivial.
\end{cor}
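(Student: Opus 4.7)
The plan is to deduce the corollary from Proposition~\ref{propfund}: it suffices to show that, for every $X \in \Ob(C)$, every element~$\alpha \in F(X)$ is weakly flexible. Given a finite homogeneous functorial semi-norm~$\widehat F$ on~$F$, Proposition~\ref{propfund} then forces $|\alpha|_{\widehat F} = 0$ for all such $\alpha$, i.e.\ $\widehat F$ is trivial.

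Consider first the covariant case. Let $Z \in \Ob(C)$ represent~$F$, with a natural bijection $\eta_X \colon \Hom_C(Z,X) \longrightarrow F(X)$. Set $Y := Z$ and $\beta := \eta_Z(\id_Z) \in F(Z)$; this is the universal element in the sense of Yoneda's lemma. By naturality of~$\eta$, for every morphism $g \colon Z \longrightarrow X$ we have $F(g)(\beta) = \eta_X(g)$. Fix any $\alpha \in F(X)$. For every $d \in \Z$ the element $d \cdot \alpha \in F(X)$ lies in the image of the bijection~$\eta_X$, so there exists $g_d \in \Hom_C(Z,X)$ with $F(g_d)(\beta) = d \cdot \alpha$. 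Hence $D(\beta,\alpha) = \Z$ is infinite, so $\alpha$ is weakly flexible, and Proposition~\ref{propfund} applies.

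The contravariant case is symmetric: interpret the definition of weak flexibility for the covariant functor $F \colon C^{\sf op} \longrightarrow \Ab$, so that $f$ ranges over $\Mor_C(X,Y)$. If $Z$ represents $F$ via a natural bijection $\eta_X \colon \Hom_C(X,Z) \longrightarrow F(X)$, again set $Y := Z$ and $\beta := \eta_Z(\id_Z)$; for every $d \in \Z$ Yoneda provides $g_d \in \Hom_C(X,Z)$ with $F(g_d)(\beta) = d \cdot \alpha$, and the same conclusion follows.

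There is essentially no technical obstacle here: the only ingredient beyond Proposition~\ref{propfund} is the Yoneda lemma, which produces a single universal element~$\beta$ whose image under~$F$ of morphisms sweeps out all of~$F(X)$, in particular every integer multiple of any fixed~$\alpha$. The only piece of bookkeeping worth being explicit about is the dualisation of the definition of weak flexibility in the contravariant case.
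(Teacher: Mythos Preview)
Your proof is correct and follows essentially the same approach as the paper: reduce to weak flexibility via Proposition~\ref{propfund}, take the representing object with its universal element~$\beta$ corresponding to the identity, and use the Yoneda bijection to realise every multiple~$d\cdot\alpha$ as~$F(g_d)(\beta)$, so that $D(\beta,\alpha)=\Z$. The paper's own proof is almost word-for-word the same, only with the contravariant case dismissed in a single phrase rather than spelled out.
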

\begin{proof}
  We give the proof in the covariant case (the contravariant case
  being dual). By Proposition~\ref{propfund}, it suffices to establish 
  that all classes are weakly flexible with respect to~$F$. 
  Let $Y \in \Ob(C)$ be a representing object of~$F$,
  i.e., $V \circ F \cong \Mor_C(Y,\args)$, where $V \colon \Ab \longrightarrow \Set$ is the 
  forgetful functor; let $\beta \in F(Y)$ be the
  element corresponding to~$\id_Y \in \Mor_C(Y,Y)$.

  Let $X \in \Ob(C)$, let $\alpha \in F(X)$, and let $d \in \Z$. Then
  $d \cdot \alpha$ corresponds to a morphism~$f \in \Mor_C(Y,X)$, and so
  $ d \cdot \alpha = F(f) (\beta)$. Thus, $D(\beta,\alpha) = \Z$, which 
  shows that $\alpha$ is weakly flexible with respect to~$F$.
\end{proof}

\begin{exa}[homotopy groups]
  Let $n \in \N_{>1}$. Then the $n$-th homotopy group functor
  \[ \pi_n \colon \Toppth \longrightarrow \Ab 
  \]
  on the homotopy category~$\Toppth$ of pointed topological spaces is
  represented by the marked $n$-sphere. Hence, by
  Corollary~\ref{corffsnrep}, the functor~$\pi_n$ does \emph{not} admit a
  non-trivial finite homogeneous functorial semi-norm.  
\end{exa}

\begin{exa}[cohomology theories on CW-complexes]
  Cellular cohomology with coefficients in an Abelian group~$A$ does
  \emph{not} admit a non-trivial finite homogeneous functorial semi-norm in
  degree~$n \in \N$: The functor~$H^n(\args;A)$ on the homotopy
  category~$\CWh$ of CW-complexes is representable by CW-complexes of
  type~$K(A,n)$. 

  In view of the Brown representability theorem, the same holds for
  finitely additive generalised cohomology theories on the homotopy
  category of (pointed) finite CW-complexes that yield finitely
  generated Abelian groups on
  spheres~\cite[Chapter~21.8]{strom}\cite{adams}: In general, the
  representing object will not necessarily have the homotopy type of a
  finite CW-complex, but only of a CW-complex~$Y$ of finite type. If
  $X$ is a finite CW-complex of dimension~$n$, then the
  inclusion~$Y_{n+1} \hookrightarrow Y$ of the $(n+1)$-skeleton
  induces a natural bijection~$[X,Y_{n+1}] \longrightarrow [X,Y]$ and
  we can apply Corollary~\ref{corffsnrep} to the restriction of our
  given cohomology functor~$F$ to the full subcategory of~$\CWh$
  generated by $X$ and~$Y_{n+1}$; on this subcategory, $F$ is
  naturally isomorphic to~$[\args, Y_{n+1}]$ This shows that all
  classes in~$F(X)$ are weakly flexible. Hence, any finite homogeneous
  functorial semi-norm on~$F$ is trivial on~$F(X)$.

  For example, this applies to complex/real topological $K$-theory or
  oriented cobordism.

  The case of countably additive cohomology theories 
  can be handled as in Section~\ref{subsecwfgc}.
\end{exa}

\begin{exa}[bounded cohomology of spaces and groups] 
  Let $n \in \N_{>1}$. Then there exist finite CW-complexes~$X$ and
  (finitely presented) groups~$G$ respectively such that
  $\|\cdot\|_\infty$ is non-trivial on~$H^n_b(X;\R)$ and
  $H^n_b(G;\R)$, respectively; for example, one could take the
  cohomological fundamental class of oriented closed connected
  hyperbolic $n$-manifolds and their fundamental groups,
  respectively~\cite{vbc}. Hence, the bounded cohomology
  functors~$H^n_b(\args;\R)$ are \emph{not} representable on the
  \begin{itemize}
    \item (homotopy) category of topological spaces 
    \item (homotopy) category of finite CW-complexes
    \item category of groups
    \item category of finitely generated groups
    \item category of finitely presented groups
    \item \dots
  \end{itemize}
  Notice that in view of the mapping theorem for bounded
  cohomology~\cite{vbc,ivanov}, the pictures for the case of
  path-connected spaces and groups are equivalent.
\end{exa}


\subsection{Weak flexibility and countable additivity}\label{subsecwfgc}

\begin{cor}[finite functorial semi-norms and countable additivity]
  \label{corffsnlim}
  Let $C$ be a category such that for every object~$X \in \Ob(C)$ the
  countably infinite coproduct~$\coprod_{\N} X$ exists in~$C$. Let $F
  \colon C \longrightarrow \Ab$ be a contravariant functor such that
  for all $X \in \Ob(C)$ the structure maps~$X \longrightarrow
  \coprod_\N X$ of the summands induce an isomorphism
  \[ \varphi_X \colon F\Bigl(\coprod_\N X\Bigr) \longrightarrow \prod_\N F(X). 
  \]
  If $\widehat F \colon C \longrightarrow \snAbfin$ is
  a finite homogeneous functorial semi-norm on~$C$, then $\widehat F$ is trivial.
\end{cor}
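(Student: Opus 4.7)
The plan is to reduce everything to Proposition~\ref{propfund} by showing that, under the hypotheses, every class~$\alpha \in F(X)$ is weakly flexible. Since $F$ is contravariant, weak flexibility of~$\alpha\in F(X)$ means: there exist $Y\in\Ob(C)$ and $\beta\in F(Y)$ such that the set of~$d\in\Z$ admitting a morphism~$f\colon X\longrightarrow Y$ in~$C$ with~$F(f)(\beta)=d\cdot\alpha$ is infinite. Once this is established for arbitrary~$\alpha$, Proposition~\ref{propfund} immediately yields~$|\alpha|_{\widehat F}=0$, and hence triviality of~$\widehat F$.

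For the construction of~$\beta$, I would fix~$X\in\Ob(C)$ and~$\alpha\in F(X)$, set~$Y:=\coprod_\N X$, and denote by~$\iota_k\colon X\longrightarrow Y$ the $k$-th structure map. The key input is that~$\varphi_X$ is an isomorphism \emph{induced} by the structure maps~$\iota_k$: this means that under~$\varphi_X$, the homomorphism~$F(\iota_k)\colon F(Y)\longrightarrow F(X)$ corresponds to the $k$-th projection~$\prod_\N F(X)\longrightarrow F(X)$. I therefore define
\[
  \beta := \varphi_X^{-1}\bigl((k\cdot\alpha)_{k\in\N}\bigr) \in F(Y).
\]
By the compatibility just mentioned, $F(\iota_k)(\beta) = k\cdot\alpha$ for every~$k\in\N$, and hence~$\N\subseteq D(\beta,\alpha)$, which shows that~$\alpha$ is weakly flexible.

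The only point that requires any care is the direction of the arrows: because $F$ is contravariant and the coproduct~$\coprod_\N X$ is a universal object receiving arrows \emph{from}~$X$, the structure maps~$\iota_k\colon X\longrightarrow Y$ are precisely the morphisms needed to realise the multiples~$d\cdot\alpha$ as images of a single class~$\beta\in F(Y)$ under~$F(\iota_k)$. So the hypothesis on countable additivity is used in exactly the form it is stated, with no additional ingredients, and this is essentially the only (mild) obstacle — aside from book-keeping, the argument mirrors the representable case of Corollary~\ref{corffsnrep}, with the role of the representing object being played by~$\coprod_\N X$ and the role of the universal element by~$\beta$.
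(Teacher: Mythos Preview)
Your argument is correct and coincides with the paper's own proof: you set~$Y:=\coprod_\N X$, take $\beta:=\varphi_X^{-1}\bigl((k\cdot\alpha)_{k\in\N}\bigr)$, observe that the structure maps~$\iota_k$ give $F(\iota_k)(\beta)=k\cdot\alpha$, and conclude weak flexibility of~$\alpha$ via Proposition~\ref{propfund}. Your extra remark on the direction of the arrows (contravariance of~$F$ versus the coproduct structure maps) is a welcome clarification, but otherwise the approaches are identical.
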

\begin{proof}
  Let $\alpha \in F(X)$. It suffices to show that $\alpha$ is weakly flexible 
  with respect to~$F$ (Proposition~\ref{propfund}). Let $Y := \coprod_\N X$, 
  and let 
  \[ \beta := \varphi_X^{-1} \bigl( (d \cdot \alpha)_{d\in \N} \bigr) \in F(Y).
  \]
  For $d \in \N$ the structure map~$f_d \in \Mor_C(X, \coprod_\N X)$ of the 
  $d$-th summand satisfies
  \begin{align*} 
    F(f_d)(\beta)  = F(f_d) \bigl( \varphi_X^{-1}  \bigl( (d \cdot \alpha)_{d\in \N} \bigr)\bigr)
                   = F(\id_X) (d \cdot \alpha) 
                   = d \cdot \alpha.
  \end{align*}
  Hence, $\alpha$ is weakly flexible with respect to~$F$.
\end{proof}

\begin{exa}[additive cohomology theories]
  Corollary~\ref{corffsnlim} applies to all generalised cohomology
  theory functors~$\Top \longrightarrow \Ab$ that
  satisfy the countable additivity axiom; this includes, in particular, 
  for all~$n \in \N$ and all Abelian groups~$A$ singular cohomology 
  $H^n(\args;A) \colon \Top \longrightarrow \Ab$ in degree~$n$. 
\end{exa}

\begin{exa}[group cohomology]
  Group cohomology does \emph{not} admit a non-trivial finite homogeneous 
  functorial semi-norm in non-zero degree: In the category of groups,
  all coproducts exist (given by free products of groups), and for~$n \in \N_{>0}$
  and all Abelian groups~$A$ the group cohomology functor~$H^n(\args;A)
  \colon \Group \longrightarrow \Ab$ satisfies the corresponding
  compatibility condition of Corollary~\ref{corffsnlim}. Notice that
  $H^n(\args;A) \colon \Group \longrightarrow \Ab$ in general is \emph{not}
  representable if~$n > 1$.
\end{exa}

\begin{exa}[comparison maps]\label{exa:compmaps}
  Let $n \in \N_{>1}$.  Because Abelian groups are amenable, we have
  that $H^n_b(K(\Z^n,1);\R) \cong 0$ (which follows by averaging
  over invariant means~\cite{vbc,ivanov}), but $H^n(K(\Z^n,1);\R)
  \cong \R$. This simple example shows that the comparison
  map~$H^n_b(\args;\R) \Longrightarrow H^n(\args;\R)$ in general is
  not surjective on the category of topological spaces or groups,
  respectively.

  In contrast, we will now provide an argument that relies on the
  \emph{non-triviality} of the $\ell^\infty$-semi-norm: Let $C$ be a
  category of spaces or groups having the following properties:
  \begin{enumerate}
    \item\label{propy1}
      The functor~$H^n(\args;\R) \colon C \longrightarrow \Ab$ does
      not admit a non-trivial finite homogeneous functorial semi-norm,
      and
    \item\label{propy2} there exist $X \in \Ob(C)$ and~$\varphi \in
      H^n(X;\R)$ with~$0 < \|\varphi\|_\infty < \infty$.
  \end{enumerate}
  E.g., we could take the category of topological spaces or the
  category of groups. Moreover, let $c \colon H^n_b(\args;\R)
  \Longrightarrow H^n(\args;\R)$ be the comparison map on~$C$
  (Example~\ref{exabc}). Then, for all~$X \in \Ob(C)$ we obtain a
  homogeneous semi-norm
  \begin{align*}
    H^n(X;\R) & \longrightarrow \R_{\geq 0}\cup\{\infty\} \\
    \varphi & \longmapsto
    \inf \bigl\{ \|\psi\|_\infty 
         \bigm| \psi \in H^n_b(X;\R), c_X(\psi) = \varphi 
         \bigr\}
  \end{align*}
  on~$H^n(X;\R)$, which is finite if and only if $c_X \colon
  H^n_b(X;\R) \longrightarrow H^n(X;\R)$ is surjective (recall that
  $\inf \emptyset = \infty$). By construction of bounded cohomology,
  this semi-norm coincides with~$\|\cdot\|_\infty$
  on~$H^n(X;\R)$. Hence, property~(2) implies that this functorial
  semi-norm on~$H^n(\args;\R) \colon C \longrightarrow \Ab$ is
  non-trivial.  Therefore, property~(1) implies that there is an~$X
  \in \Ob(C)$, for which the comparison map~$c_X \colon
  H^n_b(X;\R) \longrightarrow H^n(X;\R)$ is \emph{not}
  surjective.

  It would be interesting to know whether such arguments could also be
  successfully applied to continuous bounded cohomology of restrictive
  classes of topological groups.
\end{exa}

\begin{exa}[$KK$-theory of separable $C^*$-algebras]
  Let $B$ be a separable $C^*$-algebra. Then $KK(\args,B) \colon \Cst
  \longrightarrow \Ab$ satisfies the countable additivity condition of
  Corollary~\ref{corffsnlim}~\cite[Theorem~19.7.1]{blackadar}, where
  $\Cst$ denotes the category of separable $C^*$-algebras. Hence,
  $KK(\args,B) \colon \Cst \longrightarrow \Ab$ does \emph{not} admit
  a non-trivial finite homogeneous functorial semi-norm.
\end{exa}

\section{Weak flexibility in singular homology and $\ell^1$-homology}\label{secsingl1}

We will give characterisations of weak flexibility in singular
homology (Section~\ref{subsecwfsing}) and $\ell^1$-homology
(Section~\ref{subsecwfl1}) in terms of $\ell^1$-semi-norms. Moreover,
we will show that flexibility in $\ell^1$-homology is equivalent to
vanishing of classes (Section~\ref{subsecflexl1}).

\subsection{Weak flexibility in singular homology}\label{subsecwfsing}

Weak flexibility in singular homology can be characterised as follows:

\begin{thm}[weak flexibility in singular homology]\label{thmwfsing}
  Let $X$ be a topological space, let $n \in \N$, and let $\alpha \in
  H_n(X;\Z)$. Then the following are equivalent:
  \begin{enumerate}
    \item The class~$\alpha$ is weakly flexible with respect
      to~$H_n(\args;\Z)$.
    \item The sequence
      $\bigl(\| d \cdot \alpha \|_{1,\Z} \bigr)_{d\in\N}
      $
      contains a bounded subsequence.
  \end{enumerate}
\end{thm}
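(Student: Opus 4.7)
The implication (1)$\Rightarrow$(2) is essentially immediate from functoriality of $\|\cdot\|_{1,\Z}$: if $\alpha$ is weakly flexible, witnessed by $Y$, $\beta \in H_n(Y;\Z)$, and an infinite set $D(\beta,\alpha)\subset\Z$, then for every $d\in D(\beta,\alpha)$ some map $f_d\colon Y\to X$ satisfies $H_n(f_d)(\beta)=d\cdot\alpha$ and hence $\|d\cdot\alpha\|_{1,\Z}\leq\|\beta\|_{1,\Z}$. Since $\|d\cdot\alpha\|_{1,\Z}=\|(-d)\cdot\alpha\|_{1,\Z}$ by symmetry, we extract a bounded subsequence indexed by $\N$ from the infinite set $|D(\beta,\alpha)|\subset\N$, finishing this direction.

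The serious direction is (2)$\Rightarrow$(1). The plan is to realise representing cycles of bounded $\ell^1$-norm by maps from a single combinatorially simple space. Assume that there is an infinite set of distinct indices $d_0<d_1<\dots$ in $\N$ and a constant $K>0$ with $\|d_k\cdot\alpha\|_{1,\Z}\leq K$ for every $k$. For each $k$, pick an integral cycle $c_k=\sum_j a_{k,j}\sigma_{k,j}\in C_n(X;\Z)$ representing $d_k\cdot\alpha$ with $|c_k|_{1,\Z}\leq K+1$; in particular, $c_k$ involves at most $K+1$ distinct singular simplices, each with integer coefficient of absolute value at most $K+1$.

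The core geometric step is to associate to every such integral cycle $c$ a realisation by a continuous map from a finite $\Delta$-complex. Concretely, I take one copy of $\Delta^n$ for each summand of $c$ (with multiplicities absorbing the integer coefficients and a choice of orientation for the sign), and then use the vanishing of $\partial c$ in $C_{n-1}(X;\Z)$ to pair off cancelling $(n-1)$-faces and glue accordingly; the resulting finite $\Delta$-complex $Y_k$ carries a canonical $n$-cycle $\tilde c_k$ of the same $\ell^1$-norm as $c_k$, and the tautological map $f_k\colon Y_k\to X$ (defined by the $\sigma_{k,j}$ on each $n$-cell) satisfies $C_n(f_k)(\tilde c_k)=c_k$, hence $H_n(f_k)([\tilde c_k])=d_k\cdot\alpha$. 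I would model this construction on the standard passage from cycles to geometric realisations; the only care needed is choosing the face identifications consistently with $\partial c_k=0$.

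The final step is a pigeonhole argument. The combinatorial data of a pair $(Y_k,\tilde c_k)$ is determined by: the number of $n$-cells (at most a constant depending only on $K$), a list of integer coefficients in $[-(K{+}1),K{+}1]$, and a finite list of face-pairings between $(n-1)$-faces of these cells. Up to isomorphism there are only finitely many such pairs, so some pair $(Y,\tilde c)$ occurs for infinitely many indices $k$. Setting $\beta:=[\tilde c]\in H_n(Y;\Z)$, the remaining maps $f_k\colon Y\to X$ then witness $d_k\in D(\beta,\alpha)$ for infinitely many distinct $d_k$, so $\alpha$ is weakly flexible. The main obstacle is the rigorous construction of the realisation $(Y_k,\tilde c_k,f_k)$ with the right bookkeeping of signs and multiplicities; everything else is either a norm-functoriality computation or a finite pigeonhole.
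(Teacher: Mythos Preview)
Your proposal is correct and follows essentially the same strategy as the paper. The paper formalises the ``combinatorial type'' of a cycle $c=\sum_{j=1}^{k}\varepsilon_j\sigma_j$ (with $\varepsilon_j\in\{-1,0,1\}$) as the relation $t\in T(k,n)$ recording which faces $\sigma_j\circ i_\ell$ and $\sigma_{j'}\circ i_{\ell'}$ coincide, builds a model space~$Y_t$ with a tautological cycle~$z_{t,\varepsilon}$ and map $f_c\colon Y_t\to X$, and then pigeonholes on the finite set $T(k,n)\times\{-1,0,1\}^{k}$; this is exactly your realisation-plus-pigeonhole outline, with your pairing-based $\Delta$-complex replaced by the (for this purpose equivalent) quotient governed by all face coincidences.
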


For the proof of the theorem we will use that singular homology
classes can be represented essentially in terms of combinatorial types
that describe how faces of singular simplices in a singular chain can
cancel in the singular chain complex:

\begin{defi}[combinatorial types and associated chains]
  \hfil
  \begin{itemize}
  \item
    Let $k, n \in \N$. We then write
    \[ T(k,n) := P\bigl( (\{1,\dots,k\} \times \{0,\dots,n\})^{\times 2}
                  \bigr)
    \]
    for the set of all relations on~$\{1,\dots,k\} \times \{0,\dots,n\}$. 
  \item
    For~$t \in T(k,n)$, we define the topological space
    \[ Y_t := \{1,\dots,k\} \times \Delta^n \bigm/ \sim_t, 
    \]
    where ``$\sim_t$'' is the equivalence relation generated by
    \begin{align*} (j,x) \sim_t (j',x') 
       \Longleftrightarrow 
       \exi{y \in \Delta^{n-1}} \exi{\ell,\ell' \in \{0,\dots,n\}}
       & i_\ell(y) = x \\ 
       \land\; & i_{\ell'}(y) = x'\\ 
       \land\; & \bigl((j,\ell), (j',\ell')\bigr) \in t.
    \end{align*}
    Here, $i_\ell \colon \Delta^{n-1} \longrightarrow \Delta^n$ denotes 
    the inclusion of the $\ell$-th face.
  \item For $t \in T(k,n)$ and $\varepsilon \in \{-1,0,1\}^k$ we consider 
    the singular chain
    \[ z_{t,\varepsilon} := \sum_{j=1}^k \varepsilon_j \cdot \tau_j 
       \in C_n(Y_t;\Z),
    \]
    where $\tau_j \colon \Delta^n \longrightarrow Y_t$ is the singular
    simplex induced from the inclusion~$\Delta^n \hookrightarrow
    \{1,\dots,k\} \times \Delta^n$ as $j$-th component.
  \end{itemize}
\end{defi}

\begin{lem}[representing singular classes through combinatorial types]\label{lemcombtypes}
  Let $X$ be a topological space, and let $c = \sum_{j=1}^k
  \varepsilon_j \cdot \sigma_j \in C_n(X;\Z)$ be a cycle with
  $\varepsilon_1, \dots, \varepsilon_k \in \{-1,0,1\}$. Let 
  \begin{align*} 
    t:= \bigl\{ ((j,\ell), (j',\ell')) 
         \bigm| \;& j, j' \in \{1,\dots,k\},\  
                  \ell,\ell'\in \{0,\dots, n\},\\ 
                & \sigma_j \circ i_\ell = \sigma_{j'} \circ i_{\ell'}
         \bigr\} \in T(k,n),
  \end{align*}
  and let $f_c \colon Y_t \longrightarrow X$ be the continuous map 
  induced from the singular simplices~$\sigma_1, \dots, \sigma_k \colon \Delta^n \longrightarrow X$. 
  Then $f_c$ is well-defined, $z_{t,\varepsilon} \in C_n(Y_t;\Z)$ is a cycle, 
  and
  \[ H_n(f_c;\Z)[z_{t,\varepsilon}] = [c]  
  \]
  holds in~$H_n(X;\Z)$.
\end{lem}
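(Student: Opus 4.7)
The plan is to verify in turn: $f_c$ is well-defined, $z_{t,\varepsilon}$ is a cycle, and $H_n(f_c;\Z)[z_{t,\varepsilon}] = [c]$; the third item will fall out of the first two.

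For well-definedness, I would take the obvious continuous map $\{1,\dots,k\} \times \Delta^n \to X$, $(j,x) \mapsto \sigma_j(x)$, and check that it respects the generating relation of $\sim_t$. This is immediate: if $((j,\ell),(j',\ell')) \in t$, then by definition of $t$ we have $\sigma_j \circ i_\ell = \sigma_{j'} \circ i_{\ell'}$, so evaluating at any $y \in \Delta^{n-1}$ produces equal values in $X$. Hence the map descends to $f_c \colon Y_t \to X$, and from the construction of $\tau_j$ one has $f_c \circ \tau_j = \sigma_j$ for every $j$.

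The crucial step is $\partial z_{t,\varepsilon} = 0$. I would prove it via a ``same coincidence pattern'' observation: for all pairs of indices,
\[ \tau_j \circ i_\ell = \tau_{j'} \circ i_{\ell'} \text{ in } Y_t \iff ((j,\ell),(j',\ell')) \in t \iff \sigma_j \circ i_\ell = \sigma_{j'} \circ i_{\ell'} \text{ in } X. \]
The second equivalence is the definition of $t$; in the first, ``$\Leftarrow$'' is built into the defining relation of $\sim_t$, while ``$\Rightarrow$'' follows by post-composing with the (now-available) map $f_c$. Grouping equal faces on each side, the coefficient of each distinct singular $(n-1)$-simplex in $\partial z_{t,\varepsilon} = \sum_{j,\ell} (-1)^\ell \varepsilon_j (\tau_j \circ i_\ell)$ coincides with the coefficient of the corresponding face in $\partial c = \sum_{j,\ell}(-1)^\ell \varepsilon_j (\sigma_j \circ i_\ell)$. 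Since $\partial c = 0$ kills every such coefficient, $\partial z_{t,\varepsilon} = 0$.

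Finally, applying the chain map $C_n(f_c;\Z)$ to $z_{t,\varepsilon}$ gives $\sum_j \varepsilon_j (f_c \circ \tau_j) = \sum_j \varepsilon_j \sigma_j = c$, and passing to homology in degree $n$ yields $H_n(f_c;\Z)[z_{t,\varepsilon}] = [c]$. The main obstacle is the bijective correspondence between $t$-equivalence classes of pairs $(j,\ell)$ and distinct $(n-1)$-face simplices in each of $Y_t$ and $X$; once this correspondence is established, the coefficient bookkeeping that reduces $\partial z_{t,\varepsilon} = 0$ to $\partial c = 0$ is routine.
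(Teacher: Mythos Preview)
Your proof is correct and follows the same route as the paper, which dismisses the lemma with a single line (``All these properties directly follow from the construction''). Your write-up makes explicit exactly the verifications the paper leaves implicit---in particular, the key observation that the face-coincidence pattern among the~$\tau_j$ in~$Y_t$ matches that among the~$\sigma_j$ in~$X$ (using~$f_c$ for the non-obvious direction), which is precisely what transfers~$\partial c = 0$ to~$\partial z_{t,\varepsilon} = 0$.
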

\begin{proof}
  All these properties directly follow from the construction. 
\end{proof}

The proof of Theorem~\ref{thmwfsing} now is only a matter of counting:

\begin{proof}[Proof (of Theorem~\ref{thmwfsing})]
  Suppose that the class~$\alpha$ is weakly flexible with respect
  to~$H_n(\args;\Z)$, i.e., there is a space~$Y$ and a class~$\beta
  \in H_n(Y;\Z)$ such that $D(\beta,\alpha)$ is infinite. Let $d \in
  D(\beta,\alpha)$; then there is a map~$f \colon Y \longrightarrow X$
  with~$H_n(f;\Z)(\beta) = d \cdot \alpha$; in particular, 
  \[ \bigl\| |d| \cdot \alpha \bigr\|_{1,\Z} 
     = \bigl\| H_n(f;\Z)(\beta) \bigr\|_{1,\Z} 
     \leq \|\beta\|_{1,\Z}.
  \]
  Because $D(\beta,\alpha)$ is infinite, $(\|d \cdot \alpha
  \|_{1,\Z})_{d \in \N}$ thus contains a subsequence that is bounded
  by~$\|\beta\|_{1,\Z}$.

  Conversely, suppose that the sequence $\bigl(\| d \cdot \alpha
  \|_{1,\Z} \bigr)_{d\in\N} $ contains a subsequence,
  say~$\bigl(\| d_m \cdot \alpha \|_{1,\Z} \bigr)_{m\in\N}$, bounded
  by~$k \in \N$. For $m \in \N$ let $c_m \in C_n(X;\Z)$ be a cycle
  representing~$d_m \cdot \alpha$ with $\|c_m\|_{1,\Z} \leq k$;
  we may assume that $c_m$ is of the form
  \[ c_m = \sum_{j=1}^k \varepsilon_{m,j} \cdot \sigma_{m,j}  
  \]
  with~$\varepsilon_{m,1}, \dots, \varepsilon_{m,k} \in \{-1,0,1\}$. 
  Moreover, let $t_m \in T(k,n)$ be the corresponding combinatorial 
  type of~$c_m$ as in Lemma~\ref{lemcombtypes}. Because the set 
  \[ T(k,n) \times \{-1,0,1\}^k 
  \]
  is finite, there is a subsequence of~$(c_m)_{m \in \N}$ of chains that have
  the same combinatorial type~$t \in T(k,n)$ and the same
  coefficients~$\varepsilon \in \{-1,0,1\}^k$. For any index~$m \in \N$ 
  of this subsequence, we have 
  \[ H_n(f_{c_m};\Z)[z_{t,\varepsilon}] = H_n(f_{c_m};\Z)[z_{t_m,\varepsilon_{m}}] 
     = [c_m] = d_m \cdot \alpha,
  \]
  where $f_{c_m} \colon Y_{t_m} = Y_t \longrightarrow X$ is the map
  from Lemma~\ref{lemcombtypes}. Thus,~$D([z_{t,\varepsilon}],\alpha)$ 
  is infinite, i.e., $\alpha$ is weakly flexible with respect 
  to~$H_n(\args;\Z)$.
\end{proof}

\begin{rem} 
  The second condition of Theorem~\ref{thmwfsing} is related to the vanishing of
  the $\ell^1$-semi-norm on singular homology with $\R$-coefficients
  as follows: If $X$ is a topological space, then for all~$n\in \N$
  the change of coefficients map~$H_n(X;\Q) \longrightarrow H_n(X;\R)$
  is isometric with respect to corresponding
  $\ell^1$-semi-norms~\cite[Lemma~2.9]{mschmidt}. Rearranging denominators hence
  shows that
  \[ \| \alpha_\R \|_1 = \inf_{d \in \N_{>0}} \frac1d \cdot \| d \cdot \alpha\|_{1,\Z}\]
  for all~$\alpha \in H_n(X;\Z)$, where $\alpha_\R \in H_n(X;\R)$ denotes 
  the image of~$\alpha$ under the change of coefficients map. 
  
  This is the first step in a programme that defines and studies 
  secondary invariants associated with the $\ell^1$-semi-norm and 
  simplicial volume.
\end{rem}

\begin{cor}[weak flexibility in singular homology, manifold case]\label{cor:wfmfd}
  Let $M$ be an oriented closed connected $n$-manifold. Then the 
  following are equivalent:
  \begin{enumerate}
    \item There exists an oriented closed connected $n$-manifold~$N$ 
      such that the set $\bigl\{ \deg f \bigm| f \in \map(N,M)\bigr\}$ of 
      mapping degrees is infinite.
    \item The fundamental class~$[M]$ is weakly flexible with respect 
      to~$H_n(\args;\Z)$. 
    \item The $\R$-fundamental class~$[M]_\R$ is 
      weakly flexible with respect to~$H_n(\args;\R)$.
    \item The sequence $\bigl( \| d \cdot [M] \|_{1,\Z}\bigr)_{d \in \N}$ contains 
      a bounded subsequence.
    \item All finite homogeneous functorial semi-norms 
      on~$H_n(\args;\R)$ are zero on~
      $[M]_\R$.
  \end{enumerate} 
\end{cor}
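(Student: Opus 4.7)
The plan is to prove the cycle $(1) \Rightarrow (2) \Rightarrow (3) \Rightarrow (5) \Rightarrow (1)$, together with the equivalence $(2) \Leftrightarrow (4)$, which is just Theorem~\ref{thmwfsing} applied to the class~$\alpha := [M] \in H_n(M;\Z)$. Three of the four implications in the cycle are essentially formal. For $(1) \Rightarrow (2)$, given continuous maps~$f_k \colon N \to M$ from a fixed oriented closed connected $n$-manifold~$N$ whose degrees form an infinite subset of~$\Z$, the witness $\beta := [N] \in H_n(N;\Z)$ verifies weak flexibility of~$[M]$ because $H_n(f_k;\Z)([N]) = \deg(f_k) \cdot [M]$. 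For $(2) \Rightarrow (3)$, naturality of the change-of-coefficients map $H_n(\args;\Z) \to H_n(\args;\R)$ turns any $\Z$-witness~$(Y,\beta)$ for weak flexibility of~$[M]$ into the $\R$-witness $(Y,\beta_\R)$ for weak flexibility of~$[M]_\R$. And $(3) \Rightarrow (5)$ is an immediate instance of Proposition~\ref{propfund} applied to the functor~$H_n(\args;\R)$.

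The hard step is the closure $(5) \Rightarrow (1)$, which I would attack by contraposition. Assuming that for \emph{every} oriented closed connected $n$-manifold~$N$ the set of degrees of continuous maps~$N \to M$ is finite, I need to exhibit a non-trivial finite homogeneous functorial semi-norm on~$H_n(\args;\R)$ whose value on~$[M]_\R$ is strictly positive. This is precisely the type of invariant constructed in~\cite{crowleyloeh}: one defines a semi-norm on~$H_n(\args;\R)$ whose value on~$\alpha \in H_n(X;\R)$ measures, roughly, the most efficient representation of~$\alpha$ as a pushforward of fundamental classes of closed oriented connected $n$-manifolds, weighted so that the infimum is finite on every class. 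Under the strong-inflexibility hypothesis, the reciprocals of the degrees realizable from source manifolds to~$M$ remain bounded below, which forces the semi-norm to stay strictly positive on~$[M]_\R$ and contradicts~$(5)$. Combining this with the rest of the cycle and with the equivalence $(2) \Leftrightarrow (4)$ then delivers all five equivalences.

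The principal obstacle is the manifold-topological construction invoked in $(5) \Rightarrow (1)$: one must simultaneously arrange finiteness (every real singular homology class must be representable, up to controlled error, by a weighted pushforward from an allowed manifold), functoriality under arbitrary continuous maps, homogeneity under integer scalars, and strict positivity on~$[M]_\R$. These conditions are in genuine tension and force a careful choice of both the admissible class of source manifolds and the weight function on that class; we may cite the construction of~\cite{crowleyloeh} as a black box for this. All remaining ingredients are either definitional, contained in Proposition~\ref{propfund}, or already established in Theorem~\ref{thmwfsing}, so the core content of the corollary is the matching of the strong-inflexibility condition~$(1)$ with the functorial semi-norm condition~$(5)$ via this tailored semi-norm.
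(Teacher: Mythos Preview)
Your proposal is correct and matches the paper's proof in all essentials: both reduce $(2)\Leftrightarrow(4)$ to Theorem~\ref{thmwfsing}, invoke Proposition~\ref{propfund} for the easy direction toward~$(5)$, and import the functorial semi-norm construction from~\cite[Section~7.1]{crowleyloeh} as a black box for the converse $\neg(1)\Rightarrow\neg(5)$. The only difference is organizational: the paper groups $(1)\Leftrightarrow(2)\Leftrightarrow(3)$ together directly (via the definition of mapping degree and the universal coefficient theorem) rather than closing the loop through~$(5)$ as you do.
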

\begin{proof}
  By definition of the mapping degree and the universal coefficient
  theorem, the first three conditions are equivalent. In view of
  Theorem~\ref{thmwfsing}, condition~(2) and condition~(4) are
  equivalent. Proposition~\ref{propfund} and explicit
  constructions~\cite[Section~7.1]{crowleyloeh} of functorial
  semi-norms on~$H_n(\args;\R)$ show that condition~(1) and condition~(5) are
  equivalent.
\end{proof}

More specifically, the case of bound~$1$ in
Corollary~\ref{cor:wfmfd}(4) is equivalent to domination by an
odd-dimensional sphere~\cite[Theorem~3.2]{smallisv}.

\begin{rem}[weak flexibility in group homology]
  Gaifullin's construction of aspherical URC
  manifolds~\cite{gaifullinurc} and the fact that $\|\cdot\|_{1,\Z}$
  on integral singular homology of classifying/aspherical spaces
  coincides with~$\|\cdot\|_{1,\Z}$ on integral group homology
  of the corresponding fundamental group (Example~\ref{exal1groups}) 
  allow to translate Theorem~\ref{thmwfsing} also into the
  corresponding characterisation for weak flexbility with respect
  to~$H_n(\args;\Z) \colon \Group \longrightarrow \Ab$.
\end{rem}

\subsection{Weak flexibility in $\ell^1$-homology}\label{subsecwfl1}

We will now discuss weak flexibility in $\ell^1$-homology of
simplicial sets.  In particular, the following discussions will apply
to $\ell^1$-homology of spaces and groups.

\begin{setup}\label{setupl1}
  Let $C$ be a category, let $S \colon C \longrightarrow \Delta(\Set)$
  be a functor, and $n\in \N_{>0}$. We abbreviate
  \[ H_n^{\ell^1,S} := H_n \circ \Ch_\R^{\ell^1} \circ S \colon C \longrightarrow \Ab.
  \]
  This functor is equipped with the finite functorial
  $\ell^1$-semi-norm (Example~\ref{exal1simpset}). For simplicity, we
  assume that $C$ contains an object~$\bullet$ and that the
  simplicial set~$S(\bullet)$ corresponds to the simplicial set~$P$
  satisfying~$P(n) = \{\emptyset\}$ for all~$n \in \N$ (i.e., the
  simplicial set corresponding to a one-point space).
\end{setup}

\begin{defi}[replicable, pointable]
  In the situation of Setup~\ref{setupl1}, an object~$X \in \Ob(C)$ 
  is called 
  \begin{itemize}
    \item \emph{replicable} if the coproduct~$\coprod_\N X$ exists in~$C$,
    \item \emph{pointable (with respect to~$S$)} if $\Mor_C(\bullet,X)$ 
      and $\Mor_C(X,\bullet)$ are non-empty.
  \end{itemize}
\end{defi}

All (non-empty) topological spaces are replicable and pointable (with
respect to the total singular complex functor) in the category of
topological spaces. All groups are replicable and pointable (with
respect to the simplicial classifying space functor) in the category
of groups.

In $\ell^1$-homology, weak flexibility is the universal reason for
vanishing of the $\ell^1$-semi-norm: 

\begin{thm}[weak flexibility in $\ell^1$-homology]\label{thmweakflexl1}
  In the situation of Setup~\ref{setupl1}, let $X \in \Ob(C)$ be an
  object that is replicable and pointable, and let $\alpha \in
  H_n^{\ell^1,S}(X)$.  Then the following are equivalent:
  \begin{enumerate}
    \item The class $\alpha$ is weakly flexible with respect
      to~$H_n^{\ell^1,S}$.
    \item All finite homogeneous functorial semi-norms on~$H_n^{\ell^1,S}$ are 
      zero on~$\alpha$.
    \item We have~$\|\alpha\|_1 = 0$.
  \end{enumerate}
\end{thm}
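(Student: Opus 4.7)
My plan is to prove the cycle $(1) \Rightarrow (2) \Rightarrow (3) \Rightarrow (1)$. The first implication is immediate from Proposition~\ref{propfund}. For $(2) \Rightarrow (3)$, I would observe that the functorial $\ell^1$-semi-norm $\|\cdot\|_1$ constructed in Example~\ref{exal1simpset} is itself a finite homogeneous functorial semi-norm on~$H_n^{\ell^1,S}$; substituting $\widehat F = \|\cdot\|_1$ in~(2) gives $\|\alpha\|_1 = 0$.

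The substantive content is $(3) \Rightarrow (1)$, and here the replicable and pointable hypotheses come into play. Assuming $\|\alpha\|_1 = 0$, set $Y := \coprod_{\N} X$ with structure maps $\iota_m \colon X \to Y$, and choose for each $m \in \N$ an $\ell^1$-cycle $c_m \in \Ch_\R^{\ell^1}(S(X))_n$ representing~$\alpha$ with $|c_m|_{1,\R} \leq 2^{-m}$. Since the maps $S(\iota_m)$ induce norm-non-increasing chain maps, the series $\sum_m \Ch_\R^{\ell^1}(S(\iota_m))(c_m)$ converges in the $\ell^1$-completion and defines a cycle; let $\beta \in H_n^{\ell^1,S}(Y)$ be its homology class.

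The main step is to exhibit, for every $d \in \N$, a morphism $g_d \colon Y \to X$ in~$C$ with $H_n^{\ell^1,S}(g_d)(\beta) = d \cdot \alpha$. Using pointability, pick $q \colon X \to \bullet$ and $p \colon \bullet \to X$, and define $g_d$ on the coproduct summands by $g_d \circ \iota_m = \id_X$ for $m \leq d$ and $g_d \circ \iota_m = p \circ q$ for $m > d$. By continuity of $\Ch_\R^{\ell^1}(S(g_d))$ and functoriality, the image of~$\beta$ in homology splits as $\sum_{m \leq d}[c_m] = d \cdot \alpha$ plus a tail whose representing cycle factors through the $\ell^1$-chain complex of the one-point simplicial set~$S(\bullet)$. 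Hence $\N \subseteq D(\beta,\alpha)$, which yields weak flexibility.

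The one point that needs care is the vanishing of the tail: one must check that $H_k^{\ell^1}(S(\bullet)) = 0$ for all $k > 0$, so that the tail contributes nothing in homology after post-composition with $\Ch_\R^{\ell^1}(S(p))$. This is the standard computation that the $\ell^1$-chain complex of the one-point simplicial set is $\R$ in every degree with boundary alternately zero and the identity, and so is acyclic in positive degrees. This is where the hypothesis $n \in \N_{>0}$ from Setup~\ref{setupl1} is essential; without it, the tail could carry genuine homology and the argument would collapse.
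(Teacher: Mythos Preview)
Your argument is correct and follows the same strategy as the paper: build~$\beta$ on~$Y=\coprod_\N X$ from rapidly shrinking representatives of~$\alpha$, then use pointability to fold~$Y$ back onto~$X$ so that the unwanted summands factor through~$S(\bullet)$ and die in positive degree. The only cosmetic differences are that the paper weights the $k$-th summand by~$k$ and uses a single ``pick out the $m$-th factor'' map~$p_m$ (so that $H_n^{\ell^1,S}(p_m)(\beta)=m\cdot\alpha$), whereas you use unit weights and a ``keep the first $d$ factors'' map~$g_d$; and the paper kills the tail by exhibiting explicit primitives~$b_k$ with $\|b_k\|_1\leq\|c_k\|_1$ (so that the infinite sum of primitives converges), while you first sum the tail inside~$\Ch_\R^{\ell^1}(S(\bullet))$ and then invoke acyclicity once---which is arguably cleaner, since no norm control on the primitive is needed.
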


\begin{proof}
  The first condition implies the second one by Proposition~\ref{propfund}. 
  Clearly, the second condition implies the third. 

  For the remaining implication, suppose that $\|\alpha\|_1 = 0$. We show that 
  $\alpha$ is weakly flexible with respect to~$H_n^{\ell^1,S}$:
  Because $X$ is pointable there exist morphisms~$i \in \Mor_C(\bullet,X)$
  and $q \in \Mor_C(X,\bullet)$. Let
  \[ p := i \circ q \in \Mor_C(X,X). 
  \]
  This morphism allows us to neglect certain $\ell^1$-chains: 
  Using that $n \in \N_{>0}$, that $p$ factors over~$\bullet$ and 
  spelling out~$\Ch_\R^{\ell^1}(S(\bullet)) = \Ch_\R^{\ell^1}(P)$ explicitly
  yields: 
  \begin{itemize}
    \item[($*$)]
    For any cycle~$c \in \Ch_\R^{\ell^1}(S(X))$ 
    there is a chain~$b \in \Ch_\R^{\ell^1}(S(X))$ satisfying
    \[ \partial_{n+1} b = \Ch_\R^{\ell^1}(S(p))(c) 
    \quad\text{and}\quad
    \| b \|_1 \leq \|c\|_1.
    \]
  \end{itemize}

  We now construct a suitable $\ell^1$-homology class on~$Y :=
  \coprod_\N X$: For~$k \in \N$ let $i_k \in \Mor_C\bigl(X,  
  \coprod_\N X\bigr)$ be the structure map of the $k$-th summand. Using the
  universal property of the coproduct, for every~$m\in \N$ we obtain a
  morphism~$p_m \in \Mor_C(Y,X)$ satisfying for all~$k \in \N$:
  \[ 
     p_m \circ i_k = 
     \begin{cases} 
       \id_X & \text{if $k = m$}\\
       p     & \text{if $k \neq m$}.
     \end{cases}
  \]
  Because of $\|\alpha\|_1 = 0$, for every~$k \in \N$ 
  there is a cycle~$c_k \in \Ch_\R^{\ell^1} \circ S(X)$ representing~$\alpha$ 
  with
  \[ \|c_k\|_1 \leq \frac 1{2^k}. 
  \]
  Hence, 
  \[ c := \sum_{k \in \N} k \cdot \Ch_\R^{\ell^1}\bigl(S (i_k)\bigr) (c_k)
  \]
  is a well-defined $\ell^1$-chain in~$\Ch_\R^{\ell^1}(S(Y))$, which is a cycle. 
  We consider the corresponding class~$\beta := [c] \in H_n^{\ell^1,S}$.

  The morphisms~$(p_k)_{k \in \N}$ witness that~$D(\beta, \alpha)$ is infinite:
  Because of property~($*$), for every~$k \in \N$ there is a chain~$b_k \in \Ch_\R^{\ell^1}(S(X))$ with
  \[ \partial_{n+1} b_k = \Ch_\R^{\ell^1}(S(p))(c_k) 
    \quad\text{and}\quad
    \| b_k \|_1 \leq \|c_k\|_1 \leq \frac 1{2^k}.
  \]
  Therefore, for all~$m \in \N$ we have in~$H_n^{\ell^1,S}(X)$:
  \begin{align*}
    H_n^{\ell^1,S}(p_m) (\beta)
    & = \biggl[ \sum_{k \in \N} k \cdot \Ch_\R^{\ell^1}\bigl(S(p_m \circ i_k)\bigr)(c_k) 
        \biggr] 
    \\
    & = [ m \cdot c_m] 
      + \biggl[\sum_{k \in \N \setminus \{m\}} k \cdot \partial_{n+1} b_k  
        \biggr]
    \\ 
    & = m \cdot [c_m] + 0
    \\
    & = m \cdot \alpha.
  \end{align*}
  In the third step, we used that $\sum_{k \in \N \setminus \{m\}} k
  \cdot \partial_{n+1} b_k$ is indeed an $\ell^1$-chain and that 
  $\sum_{k \in \N \setminus \{m\}} k
  \cdot \partial_{n+1} b_k = \partial_{n+1}\bigl(\sum_{k \in \N \setminus\{m\}} k \cdot b_k \bigr)$. 

  Hence, $\alpha$ is weakly flexible with respect to~$H_n^{\ell^1,S}$.
\end{proof}

Moreover, using the same type of arguments, we also obtain that the $\ell^1$-semi-norm 
is the ``maximal'' functorial semi-norm on $\ell^1$-homology:

\begin{prop}[finite functorial semi-norms on $\ell^1$-homology]
  In the situation of Setup~\ref{setupl1}, suppose that all countable
  coproducts exist in~$C$ and that all objects in~$C$ are pointable.
  Then any finite homogeneous functorial semi-norm on~$H_n^{\ell^1,S}$
  is dominated by a multiple of the $\ell^1$-semi-norm. 
\end{prop}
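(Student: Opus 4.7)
The plan is to argue by contradiction, adapting one step up the construction used in the proof of Theorem~\ref{thmweakflexl1}. Write $|\cdot|$ for the semi-norm coming from a given finite homogeneous functorial semi-norm~$\widehat F$ on~$H_n^{\ell^1,S}$. I would first note the key algebraic upgrade: since $H_n^{\ell^1,S}$ lands in $\R$-vector spaces, $\Z$-homogeneity of $|\cdot|$ automatically gives $\Q$-homogeneity (from $q\cdot(\alpha/q) = \alpha$ one reads off $|\alpha/q| = |\alpha|/q$). Also, classes with $\|\alpha\|_1 = 0$ already satisfy $|\alpha| = 0$ by Theorem~\ref{thmweakflexl1}, so the goal is to establish a uniform constant~$K$ with $|\alpha| \leq K\cdot \|\alpha\|_1$ only on classes with $\|\alpha\|_1 > 0$.

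Assume no such~$K$ exists. Then, for each~$k \in \N$, I can pick $X_k \in \Ob(C)$ and $\alpha_k \in H_n^{\ell^1,S}(X_k)$ with $|\alpha_k| > 4^k \cdot \|\alpha_k\|_1 > 0$. Exploiting density of~$\Q$ in~$\R$ together with the $\Q$-homogeneity of~$|\cdot|$ and the $\R$-homogeneity of $\|\cdot\|_1$, I rescale each~$\alpha_k$ by a positive rational so that
\[
  \|\alpha_k\|_1 \in \bigl[2^{-(k+1)},\ 2^{-k}\bigr]\text{,\qquad and hence}\qquad |\alpha_k| > 2^{k-1}
\]
(the ratio is preserved under rescaling). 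Thus the $\ell^1$-norms are summable while the values of~$|\cdot|$ on the~$\alpha_k$ blow up.

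Now I transplant the proof of Theorem~\ref{thmweakflexl1} to this family. Put $Y := \coprod_{k \in \N} X_k$ with structure maps~$i_k \in \Mor_C(X_k, Y)$, pick cycles $c_k \in \Ch_\R^{\ell^1}(S(X_k))_n$ representing~$\alpha_k$ with $\|c_k\|_1 \leq 2 \cdot \|\alpha_k\|_1 \leq 2^{-(k-1)}$, and set
\[
  \beta := \Bigl[\,\textstyle\sum_{k\in\N} \Ch_\R^{\ell^1}(S(i_k))(c_k)\,\Bigr] \in H_n^{\ell^1,S}(Y);
\]
summability of the~$\|c_k\|_1$ guarantees this is a genuine $\ell^1$-cycle. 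For each~$m$, pointability of every~$X_k$ and the universal property of the coproduct produce $q_m \in \Mor_C(Y, X_m)$ with $q_m \circ i_m = \id_{X_m}$ and $q_m \circ i_k$ factoring through~$\bullet$ whenever~$k \neq m$. Property~$(*)$ from the proof of Theorem~\ref{thmweakflexl1}, which uses only $n > 0$ and factorisation through~$\bullet$, lets me absorb every off-diagonal term $\Ch_\R^{\ell^1}(S(q_m \circ i_k))(c_k)$ into an $\ell^1$-boundary with norm bounded by $\|c_k\|_1$; the summability then allows termwise addition, so that $H_n^{\ell^1,S}(q_m)(\beta) = [c_m] = \alpha_m$. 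Functoriality forces $|\alpha_m| \leq |\beta|$ for every~$m$, but $|\alpha_m| > 2^{m-1} \to \infty$ while $|\beta| < \infty$, a contradiction.

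The main obstacle is the rescaling step: integer multiplication alone can only \emph{inflate} the $\ell^1$-norm, so it is essential to exploit the $\R$-vector-space structure of $\ell^1$-homology and pass to rational multiples in order to shrink~$\|\alpha_k\|_1$ while the preserved large ratio keeps~$|\alpha_k|$ big. Once this normalisation is in hand, the rest is a mild generalisation of Theorem~\ref{thmweakflexl1} from a single object~$X$ to a countable family~$(X_k)_{k\in\N}$, reusing the factor-through-$\bullet$ trick encoded in property~$(*)$.
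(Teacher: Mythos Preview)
Your proof is correct and follows essentially the same route as the paper's: assume no domination constant exists, assemble a bad sequence into a single $\ell^1$-class on the coproduct, and project out via the pointability-based retractions to contradict finiteness of~$|\beta|$. The only cosmetic difference is that you absorb the summability into the rescaling of the~$\alpha_k$ (so that $\|\alpha_k\|_1 \sim 2^{-k}$) rather than into explicit weights~$1/2^k$ in the sum defining~$\beta$; you are also explicit about why $\Z$-homogeneity upgrades to $\Q$-homogeneity on an $\R$-vector space, a point the paper's renormalisation step uses tacitly.
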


\begin{proof}
  Let $|\cdot|$ be a finite homogeneous functorial semi-norm on~$H_n^{\ell^1,S}$. 
  \emph{Assume} for a contradiction that $|\cdot|$ is \emph{not} 
  dominated by a multiple of the $\ell^1$-semi-norm. That is, for every~$k \in \N$ 
  there exists an object~$X_k \in \Ob(C)$ and a class~$\alpha_k \in H_n^{\ell^1,S}(X_k)$ satisfying
  \[ |\alpha_k| > k \cdot 2^k \cdot \|\alpha_k\|_1. 
  \]
  By Theorem~\ref{thmweakflexl1}, $\|\alpha_k\|_1 \neq 0$; moreover,
  because $|\cdot|$ and $\|\cdot\|_1$ are homogeneous, we can renormalise 
  these classes in such a way that we may assume in addition that
  \[ \frac 12 \leq \|\alpha_k\|_1 \leq 1 
  \]
  for all~$k \in \N$. In particular, for every~$k \in \N$ there is 
  a cycle~$c_k \in \Ch_\R^{\ell^1}(S(X_k))$ representing~$\alpha_k$ 
  with~$\|c_k\|_1 \leq 2$. 

  Let $X := \coprod_{k \in \N} X_k$ and let $i_k \in \Mor_C(X_k,X)$ be the 
  structure map of the $k$-th summand. Furthermore, let $q_k \in \Mor_C(X_k,\bullet)$ 
  and $j_k \in \Mor_C(\bullet,X_k)$. Using the universal property of the coproduct, 
  for every~$m\in \N$ we obtain a morphism~$p_m \in \Mor_C(X,X_m)$ satisfying for 
  all~$k \in \N$:
  \[ p_m \circ i_k = 
    \begin{cases}
      \id_{X_m} & \text{if $k = m$}\\
      j_m \circ q_k & \text{if $k \neq m$}.
    \end{cases}
  \]
  Then 
  \[ \beta := \biggl[ \sum_{k \in \N} \frac 1{2^k} \cdot \Ch_\R^{\ell^1}\bigl(S(i_k)\bigr)(c_k)
              \biggr] 
     \in H_n^{\ell^1,S}(X)
  \]
  is a well-defined $\ell^1$-homology class and the same argument as in the proof 
  of Theorem~\ref{thmweakflexl1} shows that 
  \[ H_n^{\ell^1,S}(p_m)(\beta) = \frac 1{2^m} \cdot \alpha_m
  \] 
  for all~$m \in \N$. Therefore, for all~$m\in \N$ we obtain
  \begin{align*}
    |\beta| & \geq \bigl| H_n^{\ell^1,S}(p_m)(\beta) \bigr| 
              = \frac1{2^m} \cdot |\alpha_m| 
              > m \cdot \|\alpha_m\|_1 
              \geq \frac 12 \cdot m
  \end{align*}
  which contradicts~$|\cdot|$ being finite.
\end{proof}

The corresponding statement for singular homology is \emph{not} true:
it is possible to ``distort'' the $\ell^1$-semi-norm in such a way
that the resulting functorial semi-norm is finite, but \emph{not}
dominated by a multiple of the $\ell^1$-semi-norm~\cite[proof of
  Theorem~5.7]{crowleyloeh}.

Moreover, for the sake of completeness, we also mention the following,
qualitative, version of Lemma~\ref{lemcombtypes} for $\ell^1$-homology
of spaces:

\begin{lem}[countable geometric support of $\ell^1$-homology classes]
  Let $X$ be a topological space, let $n \in \N$, and let $\alpha \in
  \loneh_n(X;\R)$. Then there exists a countable simplicial
  complex~$K$, a class~$\beta \in \loneh_n(|K|;\R)$ and a continuous
  map~$f \colon |K| \longrightarrow X$ with
  \[ \loneh_n (f;\R)(\beta) = \alpha 
     \quad\text{and}
     \quad
     \|\beta\|_1 = \|\alpha\|_1.
  \]
\end{lem}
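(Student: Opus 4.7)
The plan is to realise $\alpha$ on a countable geometric model obtained by gluing, along their face identifications, the singular simplices that appear in a norm-minimising sequence of representatives of $\alpha$ (together with chains witnessing that these representatives are homologous). First I would choose $\ell^1$-cycles $(c_m)_{m \in \N}$ in $\Chf_\R^{\ell^1}(S(X))_n$ representing $\alpha$ with $\|c_m\|_1 \to \|\alpha\|_1$, and $\ell^1$-chains $(b_m)_{m \in \N}$ in $\Chf_\R^{\ell^1}(S(X))_{n+1}$ satisfying $\partial b_m = c_{m+1} - c_m$ (such $b_m$ exist because all $c_m$ represent the same $\ell^1$-homology class). Let $\Sigma$ be the (countable) set of those singular simplices of $X$ of dimension at most $n+1$ that appear in some $c_m$ or $b_m$, together with all their iterated faces; by construction $\Sigma$ is closed under taking faces.

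Next I would build a countable semi-simplicial set (equivalently, $\Delta$-complex) $K'$ whose $k$-simplices are exactly the elements of $\Sigma$ of dimension $k$, with face operators $\sigma \longmapsto \sigma \circ i_\ell$. The geometric realisation $|K'|$ is a countable CW-complex equipped with a canonical continuous map $f' \colon |K'| \to X$ that sends the $k$-cell indexed by $\sigma \in \Sigma$ to $X$ via $\sigma$. Triangulating $|K'|$ (e.g., by iterated barycentric subdivision) yields a countable simplicial complex $K$ with $|K| \cong |K'|$; let $f \colon |K| \to X$ be the induced continuous map. Each $\sigma \in \Sigma$ of dimension $k$ now admits a tautological lift $\tilde\sigma \colon \Delta^k \to |K|$ onto its corresponding cell, with $f \circ \tilde\sigma = \sigma$, and distinct elements of $\Sigma$ lift to distinct singular simplices of $|K|$.

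Replacing every singular simplex by its tautological lift, coefficient by coefficient, turns $c_m$ into an $\ell^1$-cycle $\tilde c_m \in \Chf_\R^{\ell^1}(S(|K|))_n$ with $\|\tilde c_m\|_1 = \|c_m\|_1$ and $\Chf_\R^{\ell^1}(S(f))(\tilde c_m) = c_m$, and turns $b_m$ into an $\ell^1$-chain $\tilde b_m$ with $\partial \tilde b_m = \tilde c_{m+1} - \tilde c_m$. Hence all $\tilde c_m$ represent a common class $\beta \in \loneh_n(|K|;\R)$ satisfying $\loneh_n(f;\R)(\beta) = \alpha$, and since $\loneh_n(f;\R)$ is norm-non-increasing,
\[
\|\alpha\|_1 \;\leq\; \|\beta\|_1 \;\leq\; \lim_{m \to \infty} \|\tilde c_m\|_1 \;=\; \lim_{m \to \infty} \|c_m\|_1 \;=\; \|\alpha\|_1,
\]
so $\|\beta\|_1 = \|\alpha\|_1$, as required.

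The main obstacle is the bookkeeping needed to make the gluing in the second step unambiguous: one must ensure that a singular simplex of $X$ occurring in several $c_m$'s or $b_m$'s corresponds to a single cell of $K'$, so that the tautological lifts are well-defined and the $\ell^1$-norms are genuinely preserved under lifting. The verification that iterated barycentric subdivision of a countable $\Delta$-complex produces a countable simplicial complex is routine, and once the model $K$ is in place the norm equality follows immediately from the two-sided estimate above.
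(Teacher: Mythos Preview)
Your argument is correct and follows essentially the same strategy as the paper: pick a norm-minimising sequence of $\ell^1$-cycles for~$\alpha$ together with $\ell^1$-chains witnessing that they are homologous, glue countably many standard simplices according to the face-coincidences of the singular simplices involved to obtain a countable model space with a canonical map to~$X$, lift the chains tautologically to this model to produce~$\beta$, and finally pass to a simplicial complex by (double) barycentric subdivision. The only cosmetic differences are that the paper indexes the witnessing chains as $\partial b_k = c_0 - c_k$ rather than $\partial b_m = c_{m+1} - c_m$, and that you organise the gluing as a semi-simplicial set indexed by the \emph{set} of occurring singular simplices (so identical simplices across chains are automatically identified), whereas the paper phrases the gluing in the language of the combinatorial types of Lemma~\ref{lemcombtypes}; neither choice affects the argument.
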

\begin{proof}
  We give a direct, geometric proof: Let $(c_k)_{k \in \N} \in C^{\ell^1}_n(X;\R)$
  be a sequence of cycles representing~$\alpha$ in~$\loneh_n(X;\R)$ with
  \[ \|\alpha\|_1 = \inf_{k \in \N} \|c_k\|_1,
  \]
  and let $(b_k)_{k \in \N} \in C_{n+1}^{\ell^1}(X;\R)$ satisfying
  \begin{align}\label{bdyeq} 
    \partial_{n+1} b_k = c_0 - c_k; 
  \end{align}
  for these chains~$(c_k)_{k \in \N}$ and $(b_k)_{k \in \N}$, in total
  only a countable set of singular simplices of dimension~$k$ or~$k+1$
  is needed.  Similarly to Lemma~\ref{lemcombtypes} and the discussion
  preceding that lemma, we can construct a topological space~$Y$ by
  gluing a countable set of copies of~$\Delta^n$ and~$\Delta^{n+1}$
  according to the combinatorics of the relations between the
  respective faces given by the fact that the~$(c_k)_{k \in \N}$ are
  cycles and by Equation~\eqref{bdyeq}; moreover, using the simplices
  in~$Y$ and the same coefficients as in the original chains on~$X$,
  we obtain corresponding chains~$(\overline c_k)_{k \in \N} \subset
  C_k^{\ell^1}(Y;\R)$, and $(\overline b_k)_{k \in \N} \subset
  C_{k+1}^{\ell^1}(Y;\R)$, as well as a continuous map~$f \colon Y
  \longrightarrow X$ (constructed out of the singular simplices of the
  original chains) satisfying
  \begin{align*}
    \|\overline c_k\|_1 & = \|c_k\|_1, \\
    \partial_{n} \overline c_k & = 0,\\
    \partial_{n+1} \overline b_k & = \overline c_0 - \overline c_k,\\
    C^{\ell^1}_n(f) (\overline c_0) &= c_0.
  \end{align*}
  Hence, $\beta \in H_n(Y;\R)$ satisfies~$\loneh_n(f;\R)(\beta) = \alpha$. 
  Functoriality of the $\ell^1$-se\-mi-norm and the above relations between
  the constructed cycles on~$Y$ show that $\|\beta\|_1 = \|\alpha\|_1$.

  The double barycentric subdivision of the simplices
  involved in the construction of~$Y$ yields a countable 
  simplicial complex~$K$ with~$|K| \cong Y$.
\end{proof}

Already in degree~$0$ one can see that not every $\ell^1$-homology 
class of any topological space can be isometrically represented by 
an $\ell^1$-homology class of a finite simplicial complex.

\subsection{Flexibility in $\ell^1$-homology}\label{subsecflexl1}

On the other hand, in $\ell^1$-homology, flexibility of a class is the 
same as being trivial; clearly, this behaviour is very
different from ordinary homology.

\begin{thm}[flexibility in $\ell^1$-homology]
  In the situation of Setup~\ref{setupl1}, let $X \in \Ob(C)$, and let
  $\alpha \in H_n^{\ell^1,S}(X)$.  Then the following are equivalent:
  \begin{enumerate}
    \item The class~$\alpha$ is flexible with respect to~$H_n^{\ell^1,S}$.
    \item The class~$\alpha$ is dominated by a class that is flexible
      with respect to~$H_n^{\ell^1,S}$, i.e., there is 
      an object~$Y \in \Ob(C)$, a flexible class~$\beta \in H_n^{\ell^1,S}(Y)$,  
      and a morphism~$f \in \Mor_C(Y,X)$ with $H_n^{\ell^1,S}(f) (\beta) = \alpha$.
    \item We have $\alpha = 0 \in H_n^{\ell^1,S}(X)$.
  \end{enumerate}
\end{thm}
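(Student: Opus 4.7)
The implications $(3) \Rightarrow (1)$ and $(1) \Rightarrow (2)$ are immediate from the definitions: the zero class is flexible since $H_n^{\ell^1,S}(\id_X)(0) = 0 = d \cdot 0$ for every $d \in \Z$; and any flexible class dominates itself via $Y := X$, $\beta := \alpha$, and $f := \id_X$. The substance lies in $(2) \Rightarrow (3)$, which I reduce to the claim that every flexible class $\beta \in H_n^{\ell^1,S}(Y)$ must itself vanish: granted this, if $\alpha$ is dominated by such a $\beta$ via $f \in \Mor_C(Y,X)$, then $\alpha = H_n^{\ell^1,S}(f)(\beta) = H_n^{\ell^1,S}(f)(0) = 0$.

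So let $\beta \in H_n^{\ell^1,S}(Y)$ be flexible. Since $D(\beta) \subset \Z$ is infinite, I pick $d \in D(\beta)$ with $|d| \geq 2$ and a corresponding morphism $g \in \Mor_C(Y,Y)$ with $H_n^{\ell^1,S}(g)(\beta) = d \cdot \beta$. Choose a cycle $c \in \Ch_\R^{\ell^1}(S(Y))_n$ representing $\beta$; since $\Ch_\R^{\ell^1}(S(g))(c)$ and $d \cdot c$ both represent $d \cdot \beta$, there exists $b \in \Ch_\R^{\ell^1}(S(Y))_{n+1}$ with
\[ \partial b = \Ch_\R^{\ell^1}\bigl(S(g)\bigr)(c) - d \cdot c. \]

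The crucial step is to form the $\ell^1$-geometric series
\[ B := \sum_{k=0}^\infty \frac{1}{d^{k+1}} \cdot \Ch_\R^{\ell^1}\bigl(S(g^k)\bigr)(b)
       \in \Ch_\R^{\ell^1}\bigl(S(Y)\bigr)_{n+1}, \]
which converges absolutely because each $\Ch_\R^{\ell^1}(S(g^k))$ is norm-non-increasing, so $\|B\|_1 \leq \|b\|_1/(|d|-1)$. Applying $\partial$ term-by-term (by continuity) and using both functoriality of $\Ch_\R^{\ell^1} \circ S$ and the defining relation for~$b$, I obtain a telescoping expression: with $u_k := \Ch_\R^{\ell^1}(S(g^k))(c)/d^k$,
\[ \partial B = \sum_{k=0}^\infty (u_{k+1} - u_k), \]
whose $N$-th partial sum is $u_N - u_0$. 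Since $\|u_N\|_1 \leq \|c\|_1/|d|^N \to 0$, the series converges in $\ell^1$ to $-u_0 = -c$. Hence $c = \partial(-B)$ is a boundary, so $\beta = [c] = 0$ as claimed. The main obstacle is the analytic bookkeeping: verifying absolute $\ell^1$-convergence of both series and justifying that $\partial$ commutes with the infinite sum. Notably, in contrast to Theorem~\ref{thmweakflexl1}, this argument requires no replicability or pointability hypothesis on~$Y$.
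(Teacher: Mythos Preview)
Your proof is correct and follows essentially the same approach as the paper: both reduce $(2)\Rightarrow(3)$ to showing that a flexible class~$\beta$ vanishes, pick an endomorphism~$g$ realising a multiple with~$|d|\geq 2$, and then construct an explicit $\ell^1$-primitive for a representing cycle~$c$ via a telescoping ``geometric series'' in the iterates of~$g$. The only difference is a cosmetic normalisation---the paper takes $\partial b = c - \tfrac1d\,\Ch_\R^{\ell^1}(S(g))(c)$ and sums $\tfrac{1}{d^k}\,\Ch_\R^{\ell^1}(S(g^{\circ k}))(b)$ to obtain $\partial b' = c$ directly---which amounts to rescaling your~$b$ by~$-d$.
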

\begin{proof}
  Clearly, the third condition implies the first, and the first
  condition implies the second.

  Suppose that the second condition holds, i.e., there is $Y
  \in \Ob(C)$, a flexible class~$\beta \in H_n^{\ell^1,S}(Y)$, and 
  $f \in \Mor_C(Y,X)$ with $H_n^{\ell^1,S}(f) (\beta) =
  \alpha$. By functoriality, it suffices to show that $\beta = 0 \in
  H_n^{\ell^1, S}(Y)$. 

  Because $\beta$ is flexible with respect to~$H_n^{\ell^1,S}$ there is 
  a $d\in \Z$ with $|d| \geq 2$ and an endomorphism~$g \in \Mor_C(Y,Y)$ satisfying
  \[ H_n^{\ell^1,S}(g)(\beta) = d \cdot \beta. 
  \]
  Let $c \in \Ch_\R^{\ell^1}(S(Y))$ be a cycle
  representing the class~$\beta$. Hence, there is a chain~$b \in
  \Ch_\R^{\ell^1}(S(Y))$ with
  \[ \partial_{n+1} (b) = c - \frac1d \cdot \Ch_\R^{\ell^1}\bigl(S(g)\bigr)(c). 
  \]
  Then the ``geometric series''
  \[ b' := \sum_{k \in \N} \frac1{d^k} \cdot \Ch_\R^{\ell^1}\bigl(S(g^{\circ k})\bigr)(b) 
  \]
  is an $\ell^1$-chain. Rearranging absolutely convergent series together with 
  functoriality of~$\Ch_\R^{\ell^1} \circ S$ shows that 
  \begin{align*} 
    \partial_{n+1} b' & = \sum_{k \in \N} \frac 1{d^k} \cdot \Ch_\R^{\ell^1}\bigl(S(g^{\circ k})\bigr)(c)
                  - \sum_{k \in \N} \frac 1{d^{k+1}} \cdot \Ch_\R^{\ell^1}\bigl(S(g^{\circ {k+1}})\bigr)(c)
                 = c
  \end{align*}
  holds in~$\Ch_\R^{\ell^1}(S(Y))$. 
  Therefore,~$\beta = [c] = [\partial_{n+1} b'] = 0$ in~$H^{\ell^1,S}_n(Y)$.
\end{proof}

So, in $\ell^1$-homology, the difference between trivial
$\ell^1$-semi-norm and vanishing of the class corresponds to the
difference between weak flexibility and flexibility. It is an open
problem to determine whether for any topological space~$X$, any~$n \in
\N$ and any~$\alpha \in H_n(X;\R)$ with~$\|\alpha\|_1 = 0$ the image
of~$\alpha$ in~$H_n^{\ell^1}(X;\R)$ under the comparison map between
singular and $\ell^1$-homology can be non-trivial. This problem plays a
role in the context of simplicial volume of non-compact
manifolds~\cite[Theorem~6.4]{loehl1}.


\medskip
\vfill

\noindent
\emph{Clara L\"oh}\\[.5em]
  {\small
  \begin{tabular}{@{\qquad}l}
    Fakult\"at f\"ur Mathematik\\
    Universit\"at Regensburg\\
    93040 Regensburg\\
    Germany\\
    \textsf{clara.loeh@mathematik.uni-regensburg.de}\\
    \textsf{http://www.mathematik.uni-regensburg.de/loeh}
  \end{tabular}}
\end{document}